\newtheorem{proposition}{Proposition}
\newtheorem{remark}{Remark}
\date{\today}
\newcommand{\be}[1]{\begin{equation}\label{#1}}
\newcommand{\ee}{\end{equation}}
\title[Inversion of trace formulas]{Inversion of trace formulas for a Sturm-Liouville operator}
  \author[X. Xu]{Xiang Xu}
\address{School of Mathematical Sciences,
  Zhejiang University, Hangzhou, China
  (\tt{xxu@zju.edu.cn}).}
  \author[J. Zhai]{Jian Zhai}
\address{Institute for Advanced Study,
  The Hong Kong University of Science and Technology, Hong Kong, China
  (\tt{jian.zhai@outlook.com}).}
\begin{document}

\begin{abstract}
 This paper revisits the classical problem ``Can we hear the density of a string?", which can be formulated as an inverse spectral problem for a Sturm-Liouville operator. Based on inverting a sequence of trace formulas,  we propose a new numerical scheme to reconstruct the density. Numerical experiments are presented to verify the validity and effectiveness of the numerical scheme.
\end{abstract}
\maketitle
\section{Introduction}
The main purpose of this paper is to propose a novel numerical scheme for some inverse spectral problems. The scheme can be generalized, but for simplicity will be presented with an \textit{ad hoc} algorithm for the classical Sturm-Liouville eigenvalue problem:
\begin{equation}\label{eq1}
\begin{split}
&-\frac{\mathrm{d}^2u}{\mathrm{d}x^2}=\lambda\rho u,\quad\quad\quad x\in[0,1],\\
&u(0)=u(1)=0,
\end{split}
\end{equation}
where $\rho\in L^\infty(0,1)$ and $\rho>0$. Denote $\{\lambda_k(\rho)\}_{k=1}^\infty$ to be the generalized eigenvalues for the above problem. The \emph{inverse spectral problem} is to recover the density $\rho$ from those eigenvalues $\{\lambda_k\}_{k=1}^\infty$. To avoid the nonuniqueness, we assume $\rho(x)$ is even with respect to $x=\frac{1}{2}$, that is $\rho(x)=\rho(1-x)$. 

If $\rho\in C^2([0,1])$, then the \textit{Liouville transformation}
\[
\sigma(x)=\sqrt{\rho(x)},\quad f(x)=\rho(x)^{1/4},\quad L=\int_0^1\sigma(s)\mathrm{d}s,\quad t(x)=\frac{1}{L}\int_0^x\sigma(s)\mathrm{d}s,\quad v(t)=f(x(t))u(x(t))
\]
reduces the inverse problem to the problem of recovering $q(t)$ in
\begin{equation}\label{eq11}
\begin{split}
&-\frac{\mathrm{d}^2 v}{\mathrm{d}t^2}+qv=L^2\lambda v, \quad\quad t\in [0,1],\\
&v(0)=v(1)=0,
\end{split}
\end{equation}
from the eigenvalues $\{L^2\cdot\lambda_k(\rho)\}_{k=1}^\infty$, where
\begin{equation}\label{qrelation}
q(t)=L^2\frac{f(x)}{\rho(x)}\left(\frac{f'(x)}{f(x)^2}\right)'\Bigg\vert_{x=x(t)}.
\end{equation}

The real-valued number $L$ can be recovered from spectral data \cite{rundell1992reconstruction1}. And the uniqueness of $q$ in $(\ref{eq11})$ is guaranteed if $q$ is assumed to be symmetric about $x=\frac{1}{2}$. For more details, we refer to \cite{kirsch2011introduction} and the discussions therein. It is still notable that the problems for $(\ref{eq1})$ and $(\ref{eq11})$ might be slightly different. First, the recovery of $\rho(x)$ from $q(x)$ from the relation $(\ref{qrelation})$ might need some additional information. Second, we need some regularity assumption on $\rho(x)$ to do the \textit{Liouville transformation}. In this paper, we will directly deal with problem $(\ref{eq1})$, and hence only need to assume $\rho(x)$ is bounded, positive and even with respect to $x=\frac{1}{2}$ for the discussions below.

Numerical methods for one dimensional inverse spectral problems $(\ref{eq1})$ and $(\ref{eq11})$ abound. We refer to \cite{lowe1992recovery,rundell1992reconstruction1,rundell1992reconstruction} and the references therein.  However, most methods rely heavily on the one-dimensional nature of the problem. In this paper, we give a novel numerical method for the reconstruction of $\rho$ in $(\ref{eq1})$ based on trace formulas. Moreover, we believe that this method can be adapted to some other, and even higher dimensional inverse spectral problems.

We need Fr\'echet differentiability of the forward map if we intend to use some gradient-based algorithm for solving inverse problems. 
Assume we have an elliptic operator $\mathcal{M}(\rho)$ on a compact manifold (with or without boundary) with a parameter $\rho$, and we want to recover $\rho$ from the eigenvalues $\{\lambda_k(\rho)\}_{k=1}^\infty$ of $\mathcal{M}(\rho)$. The differentiability of the map
\begin{equation}\label{map}
\rho\rightarrow \{\lambda_k(\rho)\}_{k=1}^\infty,
\end{equation}
is a very delicate issue. Perturbation theory of eigenvalues and eigenfunctions has been studied by Kato \cite{kato2013perturbation}, but is quite inaccessible.
Although this is not a problem for the inverse spectral problem related to $\eqref{eq1}$ due to the fact that all eigenvalues are simple, and many methods do not directly invert this map because of other concerns, one needs to bear in mind that
it might be a serious issue for some higher dimensional or non-Hermitian problems.
In this article, we will give an inversion scheme based on a different map, arising from trace formulas, of which the Fr\'echet differentiability is well guaranteed.
For \eqref{eq1}, the map is given as follows
\begin{equation}\label{map1}
\rho\rightarrow \left\{\sum_{k=1}^\infty\lambda_k^{-s}\right\}_{s=1}^\infty.
\end{equation}
To be more precise, we will actually invert the following map, for the sake of numerical stability,
\begin{equation}\label{map11}
\rho\rightarrow \left\{\sum_{k=1}^\infty\mathcal{P}_n(\lambda_k^{-1})\right\}_{n=1}^\infty,
\end{equation}
where $\{\mathcal{P}_n\}_n^\infty$ is a sequence of carefully chosen polynomials.

The rest of the paper is organized as follows. In Section \ref{traceformula}, we derive a sequence of trace formulas that will be utilized for inversion. In Section \ref{algorithms}, the inversion scheme is presented  together with various implementation details. In Section \ref{numerical}, we present some numerical experiments and discuss the performance of the algorithm. In Section \ref{discussion}, we discuss the merits and limitations of the proposed algorithm.

\section{Trace formulas}\label{traceformula}
Our starting point for the new algorithm is the observation of the identity
\[
\sum_{k=1}^\infty \lambda_k^{-1}=\text{trace}(A\circ M_\rho)=\int_0^1x(x-1)\rho(x)\mathrm{d}x,
\]
which was mentioned in \cite{gesztesy2011damped}. The above identity gives an explicit relation between the density $\rho(x)$ and the eigenvalues. We will derive formulas for $\sum_{k=1}^\infty \lambda_k^{-s}$, $s\geq 2,\,3,\cdots,$ in this section.

We denote the Dirichlet Laplacian as $\Delta_D$. Define $A=(-\Delta_D)^{-1}\in\mathcal{L}(L^2(0,1),L^2(0,1))$ such that $v=Af$ satisfies
\begin{equation}
\begin{split}
&-\frac{\mathrm{d}^2v}{\mathrm{d}x^2}=f,\quad\quad\quad x\in[0,1],\\
&v(0)=v(1)=0.
\end{split}
\end{equation}
The operator $A$ is an integral operator, in the form
\[
(Af)(x)=\int_0^1g(x,y)f(y)\mathrm{d}y,
\]
with the associated Schwartz kernel
\[
g(x,y)=\begin{cases}x(1-y),\quad\quad 0\leq x\leq y\leq 1,\\
y(1-x),\quad\quad 0\leq y\leq x\leq 1.
\end{cases}
\]
It is clear that $g(\cdot\,,\,\cdot)$ is continuous on $[0,1]\times[0,1]$.

Denote the operator $T$ as $Tf=u$, where $u$ solves
\begin{equation}
\begin{split}
&-\frac{\mathrm{d}^2u}{\mathrm{d}x^2}=\rho f,\quad\quad\quad x\in[0,1],\\
&u(0)=u(1)=0.
\end{split}
\end{equation}
Then we have $T=A\circ M_\rho$, where $M_\rho\in\mathcal{L}(L^2(0,1),L^2(0,1))$ is the multiplication operator $M_\rho f=\rho f$, and the Schwartz kernel of $T$ is
\[
G(x,y)=g(x,y)\rho(y).
\]

Recall that a bounded linear operator $B$ over a separable Hilbert space $\mathcal{H}$ is said to be of trace class if for some orthonormal bases $\{e_k\}_{k}$ of $\mathcal{H}$, the sum
\[
\sum_{k}\langle (B^*B)^{1/2}e_k,e_k\rangle
\]
is finite. In this case, the trace of $B$ is given by
\[
\mathrm{trace} (B)=\sum_{k}\langle Be_k,e_k\rangle.
\]
Here $\mathrm{trace} (B)$ is independent of the choice of orthonormal basis, and the Lidskii's theorem states that if $\{\kappa_k(B)\}$ are the eigenvalues of $B$, then
\[
\mathrm{trace} (B)=\sum_{k}\kappa_k(B).
\]

Also remember that a bounded operator $B$ is called Hilbert-Schmidt if $B^*B$ is of trace class. We need the following propositions to continue. For more details, we refer to \cite{Lax}.
\begin{proposition}
If $B$ and $C$ are Hilbert-Schmidt operator, then $B\circ C$ and $C\circ B$ are of trace class, and
\[\mathrm{trace}(B\circ C)=\mathrm{trace}(C\circ B).\]
\end{proposition}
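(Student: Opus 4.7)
The plan is to establish the proposition in two steps: first prove that the product $B \circ C$ of two Hilbert-Schmidt operators lies in the trace class, with the bound $\|BC\|_1 \le \|B\|_{HS}\|C\|_{HS}$, and then verify the cyclicity identity by reducing both sides to the same absolutely convergent double sum.

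For the first step I would fix an orthonormal basis $\{e_n\}$ adapted to $|BC|$, i.e.\ consisting of eigenvectors of $|BC|$, and invoke the polar decomposition $BC = U|BC|$ with $U$ a partial isometry of norm at most one. Then
\[
\mathrm{trace}(|BC|) = \sum_n \langle |BC| e_n, e_n\rangle = \sum_n \langle U^* BC e_n, e_n\rangle = \sum_n \langle C e_n,\, B^* U e_n\rangle,
\]
and the Cauchy-Schwarz inequality (applied term by term and then to the resulting sequences in $\ell^2$) gives
\[
\mathrm{trace}(|BC|) \le \Bigl(\sum_n \|C e_n\|^2\Bigr)^{1/2}\Bigl(\sum_n \|B^* U e_n\|^2\Bigr)^{1/2} \le \|C\|_{HS}\,\|B\|_{HS},
\]
using $\|B^* U\|_{HS} \le \|U\|\,\|B^*\|_{HS} = \|B\|_{HS}$. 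Since the singular values of $BC$ are summable, $BC$ is of trace class; the argument for $CB$ is identical.

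For the cyclicity, I would insert a second orthonormal basis $\{f_m\}$ as a resolution of the identity inside each trace:
\[
\mathrm{trace}(BC) = \sum_n \langle BC e_n, e_n\rangle = \sum_n \sum_m \langle C e_n, f_m\rangle\,\langle B f_m, e_n\rangle,
\]
\[
\mathrm{trace}(CB) = \sum_m \langle CB f_m, f_m\rangle = \sum_m \sum_n \langle B f_m, e_n\rangle\,\langle C e_n, f_m\rangle.
\]
These two iterated sums coincide termwise, so the result will follow from Fubini's theorem provided the double series converges absolutely. That is where the Hilbert-Schmidt assumption pays off: by Cauchy-Schwarz,
\[
\sum_{n,m} \bigl|\langle C e_n, f_m\rangle\bigr|\bigl|\langle B f_m, e_n\rangle\bigr| \le \Bigl(\sum_{n,m}|\langle C e_n, f_m\rangle|^2\Bigr)^{1/2}\Bigl(\sum_{n,m}|\langle B f_m, e_n\rangle|^2\Bigr)^{1/2} = \|C\|_{HS}\,\|B\|_{HS}<\infty.
\]

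The main obstacle is really the first step: trace-class membership is not simply a matter of summing a pointwise estimate, because $\mathrm{trace}(|BC|)$ is defined via the eigenvalues of $|BC|$ rather than the matrix entries of $BC$ in an arbitrary basis. The polar decomposition trick, which lets one express this intrinsic quantity as a bilinear pairing that the Hilbert-Schmidt Cauchy-Schwarz inequality can control, is the crucial idea; once trace-class membership is secured, the cyclicity identity is essentially a bookkeeping argument justified by absolute convergence.
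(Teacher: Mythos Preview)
Your argument is correct and is essentially the standard textbook proof. The paper itself does not give a proof of this proposition; it is stated without proof and the reader is referred to Lax's \emph{Functional Analysis} for details, so there is nothing to compare against beyond noting that your write-up supplies what the paper omits.

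One small remark on presentation: in the first step you pick $\{e_n\}$ to be an orthonormal basis of eigenvectors of $|BC|$. Since $|BC|$ is only known a priori to be compact (as the modulus of a product of compact operators), not yet trace class, you should perhaps phrase the argument as bounding the partial sums $\sum_{n\le N}\langle |BC|e_n,e_n\rangle$ uniformly in $N$, which is exactly what your Cauchy--Schwarz estimate does; this avoids any circularity in speaking of $\mathrm{trace}(|BC|)$ before its finiteness is established. The identity $|BC|e_n=U^*BC\,e_n$ you use is justified because $U^*U$ is the orthogonal projection onto $\overline{\mathrm{ran}\,|BC|}$, so it acts as the identity on each eigenvector with nonzero eigenvalue and both sides vanish on the kernel. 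With that caveat the polar-decomposition step is sound, and the Fubini argument for cyclicity is fine as written.
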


Assume $\Omega$ is a bounded domain in $\mathbb{R}^n$ with smooth boundary.

\begin{proposition}\label{prop_trace}
If $B$ is an operator of trace class on $L^2(\Omega)$, given by
\[(Bf)(x)=\int_{\Omega}K(x,y)f(y)\mathrm{d}y,\]
for $f\in L^2(\Omega)$.
If $K(\cdot\,,\,\cdot)$ is continous in $\Omega\times\Omega$, then
\[\mathrm{trace}(B)=\int_{\Omega}K(x,x)\mathrm{d}x.\]
\end{proposition}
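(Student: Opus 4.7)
My plan is to combine the singular value decomposition of $B$ with a mollification argument along the diagonal. Since $B$ is trace class on the separable Hilbert space $L^2(\Omega)$, it admits a representation
\[
Bf=\sum_{k} s_k \langle f,\phi_k\rangle\,\psi_k,\qquad f\in L^2(\Omega),
\]
for orthonormal sequences $\{\phi_k\},\{\psi_k\}$ and singular values $s_k\ge 0$ with $\sum_k s_k<\infty$. Completing $\{\phi_k\}$ to an orthonormal basis of $L^2(\Omega)$ by adjoining vectors in $\ker B$ and computing the trace in this basis yields the absolutely convergent identity
\[
\mathrm{trace}(B)=\sum_{k} s_k\,\langle \psi_k,\phi_k\rangle.
\]

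Next I would identify the kernel of $B$. The same singular value decomposition shows that the series
\[
K(x,y)=\sum_{k} s_k\,\psi_k(x)\,\overline{\phi_k(y)}
\]
converges in $L^2(\Omega\times\Omega)$ to the integral kernel of $B$. If one could substitute $y=x$ term by term, the summability $\sum_k s_k<\infty$ would immediately give $\int_\Omega K(x,x)\,dx=\sum_k s_k\langle\psi_k,\phi_k\rangle$. The catch, and this is the main technical obstacle, is that the diagonal $\{x=y\}$ is a null set in $\Omega\times\Omega$, so the $L^2$ identity alone does not determine the pointwise values of $K$ there.

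To overcome this, I would introduce a nonnegative symmetric approximate identity $\eta_\eps$ on $\mathbb{R}^n$ and consider
\[
I_\eps=\int_\Omega\int_\Omega K(x,y)\,\eta_\eps(x-y)\,dx\,dy.
\]
Since $K$ is uniformly continuous on the compact set $\overline{\Omega}\times\overline{\Omega}$, the integrand concentrates uniformly on the diagonal, producing $I_\eps\to \int_\Omega K(x,x)\,dx$ as $\eps\to 0$. On the other hand, expanding $K$ in the $L^2$-convergent series and exchanging sum and integral using the bound $\sum_k s_k<\infty$ together with the $L^2$-continuity of convolution gives
\[
I_\eps=\sum_{k} s_k\,\langle \psi_k,\eta_\eps*\phi_k\rangle\;\longrightarrow\;\sum_{k}s_k\,\langle \psi_k,\phi_k\rangle.
\]
Matching the two limits closes the argument. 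A small amount of extra care is needed near $\partial\Omega$, where $\eta_\eps(x-y)$ may couple points to translates just outside $\Omega$; this is handled by extending $K$ continuously to a neighborhood of $\overline{\Omega}\times\overline{\Omega}$ or by using a mollifier adapted to $\Omega$.
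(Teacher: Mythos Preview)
The paper does not supply its own proof of this proposition; it is stated together with the neighboring propositions and referred to Lax's \emph{Functional Analysis} for details. So there is no in-paper argument to compare against directly.

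That said, your approach is correct and is one of the standard routes to this result. The key steps---the singular value decomposition giving $\mathrm{trace}(B)=\sum_k s_k\langle\psi_k,\phi_k\rangle$, the $L^2(\Omega\times\Omega)$ expansion of the kernel, and the mollified diagonal integral $I_\eps$---all hold as you describe. The two limits match because (i) pairing the $L^2$-convergent kernel series against the bounded function $\eta_\eps(x-y)\in L^2(\Omega\times\Omega)$ justifies the term-by-term computation, and (ii) the uniform bound $|\langle\psi_k,\eta_\eps*\phi_k\rangle|\le\|\eta_\eps\|_{L^1}=1$ together with $\sum_k s_k<\infty$ allows passage to the limit in $\eps$ by dominated convergence for series. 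Your remark about the boundary layer is apt and is handled exactly as you indicate: the contribution from $\{x:\dist(x,\partial\Omega)<\eps\}$ is controlled by the boundedness of $K$ on $\overline\Omega\times\overline\Omega$ and the vanishing measure of that strip. One small point: the proposition as stated only assumes continuity on the open set $\Omega\times\Omega$, whereas your uniform-continuity step implicitly uses continuity up to the closure; this is how the paper actually applies the result (the Green's function $g$ is continuous on $[0,1]\times[0,1]$), so it is a harmless strengthening.

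For context, the proof one typically finds in Lax (and which the paper in effect invokes later, via Mercer's theorem for $A=(-\Delta_D)^{-1}$) goes through the positive self-adjoint case first, where Mercer gives uniform convergence of the eigenfunction expansion on the diagonal, and then extends to general trace class operators by polarization or by writing $B$ as a combination of positive operators. Your mollification argument bypasses Mercer entirely and works directly with the singular value decomposition; it is closer in spirit to the approach of Duflo and Brislawn and has the advantage of not needing any self-adjoint decomposition of $B$.
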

\begin{proposition}
If $B$ and $C$ are two Hilbert-Schmidt operators, with kernels $K_1$ and $K_2$, then the kernel of $D=B\circ C$ is given by
\[K(x,y)=\int_{\Omega}K_1(x,z)K_2(z,y)\mathrm{d}z.\]
\end{proposition}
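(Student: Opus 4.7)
The plan is to apply $D=B\circ C$ to an arbitrary test function $f\in L^2(\Omega)$ and read off the kernel directly, with Fubini's theorem doing the essential work. First I would write
\[
(Df)(x)=(B(Cf))(x)=\int_\Omega K_1(x,z)(Cf)(z)\,\mathrm{d}z
=\int_\Omega K_1(x,z)\left(\int_\Omega K_2(z,y)f(y)\,\mathrm{d}y\right)\mathrm{d}z,
\]
using the integral representations of $B$ and $C$ in succession. The target formula then falls out as soon as the order of integration can be interchanged.

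The step I expect to be the main obstacle is the justification of Fubini's theorem; everything else is formal manipulation. Here I would exploit that a Hilbert-Schmidt operator has a kernel lying in $L^2(\Omega\times\Omega)$, so both $K_1$ and $K_2$ belong to $L^2(\Omega\times\Omega)$. By the Cauchy-Schwarz inequality in the $z$-variable,
\[
\int_\Omega |K_1(x,z)K_2(z,y)|\,\mathrm{d}z
\leq \|K_1(x,\cdot)\|_{L^2(\Omega)}\,\|K_2(\cdot,y)\|_{L^2(\Omega)},
\]
and by Tonelli this quantity is finite for almost every $(x,y)\in\Omega\times\Omega$. A second Cauchy-Schwarz in $y$ against $f\in L^2(\Omega)$ then shows that the iterated absolute integral is finite for a.e.\ $x$, which legitimizes Fubini.

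Having swapped the order, I would conclude
\[
(Df)(x)=\int_\Omega \left(\int_\Omega K_1(x,z)K_2(z,y)\,\mathrm{d}z\right)f(y)\,\mathrm{d}y,
\]
so that $K(x,y):=\int_\Omega K_1(x,z)K_2(z,y)\,\mathrm{d}z$ is indeed the Schwartz kernel of $D$. As a brief sanity check I would verify that $K\in L^2(\Omega\times\Omega)$, which again follows from the Cauchy-Schwarz bound above together with Fubini applied to $|K_1|^2$ and $|K_2|^2$; this is consistent with the fact that the composition of two Hilbert-Schmidt operators is itself Hilbert-Schmidt (in fact trace class, as used in the preceding proposition).
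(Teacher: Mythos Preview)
Your argument is correct: writing out $(B(Cf))(x)$ as an iterated integral and invoking Fubini---justified via the $L^2(\Omega\times\Omega)$ membership of Hilbert--Schmidt kernels together with Cauchy--Schwarz---is exactly the standard route to this identity, and your integrability checks are the right ones.

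The paper itself does not prove this proposition at all; it is stated as a background fact with a reference to Lax's textbook, so there is no ``paper's own proof'' to compare against. Your write-up therefore supplies strictly more than the paper does, and would be an acceptable proof of the quoted statement.
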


Define the trace of $T^s,\,s\in\mathbb{Z}^+$ as
\[
\tau_s(\rho)=\text{trace}(T^s).
\]
Clearly $\tau_s(\rho)=\sum_{k=1}^\infty \lambda_k^{-s}$. 
By above propositions, we have
\begin{equation}\label{trace}
\tau_s(\rho)=\int_0^1\cdots\int_0^1\rho(x_1)g(x_1,x_2)\cdots \rho(x_s)g(x_s,x_1)\mathrm{d}x_1\cdots\mathrm{d}x_s.
\end{equation}
We design an algorithm for the inverse spectral problem $(\ref{eq1})$ based on inverting the map
\[
\rho\rightarrow\{\tau_s(\rho)\}_{s=1}^\infty,
\]
which gives a more explicit relation between the function $\rho$ and the data than the map $(\ref{map})$.
The following proposition establishes the equivalence of the data $\{\tau_s(\rho)\}_{s=1}^\infty$ and $\{\lambda_k(\rho)\}_{k=1}^\infty$. To be general, we allow some eigenvalues to be non-simple.
\begin{proposition}
There is a one-to-one correspondence between $\{\lambda_k\}_{k=1}^\infty$ and $\{\tau_s\}_{s=1}^\infty$.
\end{proposition}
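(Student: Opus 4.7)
The forward direction is built into the definition: the identity $\tau_s=\sum_{k=1}^\infty\lambda_k^{-s}$ reads off each $\tau_s$ from the multiset of eigenvalues. So the content is the converse: from the sequence $\{\tau_s\}_{s=1}^\infty$ one must recover the spectrum with the correct multiplicities. My plan is to package the $\tau_s$ into a single generating function and identify its zero set.

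For the Sturm-Liouville problem \eqref{eq1} the Weyl-type asymptotics give $\lambda_k\sim C k^2$, so $\sum_k\lambda_k^{-1}<\infty$ and, writing $m_1$ for the multiplicity of the smallest eigenvalue $\lambda_1$, one has $\tau_s=m_1\lambda_1^{-s}+O(\lambda_2^{-s})$ as $s\to\infty$, where $\lambda_2>\lambda_1$ is the next distinct eigenvalue. Hence $\tau_s^{1/s}\to\lambda_1^{-1}$, so the power series
\[
\Phi(z):=-\sum_{s=1}^\infty\frac{\tau_s}{s}\,z^s
\]
has radius of convergence exactly $\lambda_1$. On the disk $|z|<\lambda_1$, absolute convergence lets me interchange sums to get
\[
\Phi(z)=-\sum_{k=1}^\infty\sum_{s=1}^\infty\frac{(z/\lambda_k)^s}{s}=\sum_{k=1}^\infty\log\!\left(1-\frac{z}{\lambda_k}\right),
\]
so
\[
P(z):=\exp(\Phi(z))=\prod_{k=1}^\infty\left(1-\frac{z}{\lambda_k}\right)\qquad(|z|<\lambda_1).
\]
Because $\sum_k\lambda_k^{-1}<\infty$, the Weierstrass product on the right converges uniformly on compact subsets of $\mathbb{C}$ to an entire function of finite genus whose zero set, counted with multiplicity, is exactly $\{\lambda_k\}$. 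That entire function agrees with $P$ on $|z|<\lambda_1$, hence is the unique analytic continuation of $P$ to $\mathbb{C}$.

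This gives the reconstruction: starting from $\{\tau_s\}$, form $\Phi$, then $P=\exp(\Phi)$, then analytically continue and read off the eigenvalues (with multiplicities) as the zeros of the resulting entire function. The one-to-one correspondence follows. The delicate point to check carefully is the multiplicity bookkeeping — that a $k$-fold zero of the extended $P$ corresponds to an eigenvalue of multiplicity $k$ — but this is immediate from the product formula above, since each $\lambda_k$ contributes an independent linear factor $1-z/\lambda_k$ to $P$. The convergence of the product (hence the legitimacy of the analytic continuation) is guaranteed by the summability $\sum_k\lambda_k^{-1}<\infty$, which is the essential analytic input from the Sturm-Liouville asymptotics.
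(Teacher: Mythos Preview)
Your argument is correct and takes a genuinely different route from the paper. The paper's proof is entirely elementary and iterative: it observes that $\tau_s^{1/s}\to\lambda_1^{-1}$ as $s\to\infty$, then that $\lambda_1^s\tau_s\to m_1$ recovers the multiplicity of the smallest eigenvalue, and then subtracts the contribution $m_1\lambda_1^{-s}$ from each $\tau_s$ and repeats to peel off $\lambda_2$, $\lambda_3$, and so on. No complex analysis is invoked. Your approach instead packages all the $\tau_s$ into the generating function $\Phi$, recognizes $\exp\Phi$ as the genus-zero canonical product $\prod_k(1-z/\lambda_k)$, and reads off the full spectrum at once as the zero set of its entire analytic continuation. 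Both arguments rest on the same analytic input, the summability $\sum_k\lambda_k^{-1}<\infty$ (equivalently $\tau_1<\infty$): the paper needs it so that the tail $\sum_{k>m_1}(\lambda_1/\lambda_k)^s$ stays bounded and the $s$-th root limit works, while you need it for absolute convergence of the double sum and for convergence of the Weierstrass product. The paper's proof is shorter and more self-contained; yours has the advantage of producing a single closed-form object (the Hadamard product) that encodes the entire spectrum, and it makes the link to spectral determinants explicit, which is conceptually in keeping with the trace-formula theme of the paper.
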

\begin{proof}
Assume the eigenvalues are ordered, i.e.,
\[
\lambda_1\leq\lambda_2\leq\lambda_3\leq\cdots.
\]
 Suppose the first eigenvalue $\lambda_1=\lambda_2=\cdots=\lambda_{m_1}$ has multiplicity $m_1$, $\lambda_{m_1}<\lambda_{m_1+1}$, then
\[
\lim_{s\rightarrow\infty}\sqrt[\leftroot{1}\uproot{20}s]{\sum_{k=1}^\infty\lambda_k^{-s}}=\lim_{s\rightarrow\infty}\sqrt[\leftroot{-1}\uproot{20}s]{m_1\lambda_1^{-s}+\sum_{k=m_1+1}^\infty\lambda_k^{-s}}=\lambda_1^{-1}\lim_{s\rightarrow\infty}\sqrt[\leftroot{-1}\uproot{20}s]{m_1+\sum_{k=m_1+1}^\infty\left(\frac{\lambda_1}{\lambda_k}\right)^s}=\lambda_1^{-1}
\]
and
\[
\lim_{s\rightarrow\infty}\lambda_1^s\times\sum_{k=1}^\infty\lambda_k^{-s}=m_1.
\]
We have already determined $\lambda_1$ and its multiplicity $m_1$. Then we exclude them from the traces and iterate.
\end{proof}

\begin{remark}
One can easily check the above proof and see that actually the map $\{\lambda_n\}_{n=1}^\infty\rightarrow\{\tau_s\}_{s=S}^\infty$ is injective for any $S\geq 1$. For some (positive self-adjoint) second-order elliptic differential operator $\mathcal{M}(\rho)$ in higher dimensions, the Green's function $G(x,y)$ might not be continuous on the diagonal $x=y$, and therefore we can not apply Proposition $\ref{prop_trace}$ to $\mathcal{M}(\rho)^{-1}$. However, we would not lose any information if we just use $\mathrm{trace}(\mathcal{M}(\rho)^{-s})$, $s=2,3,\cdots$.
\end{remark}

\section{Algorithm}\label{algorithms}
\subsection{Inversion scheme}
We are not to directly invert the formula $(\ref{trace})$, where multiple integral needs to be done. Instead, we will take the advantage of semi-separability of Green's function $g(x,y)$ for $-\Delta_D$ to simplify the calculation. Assume $\{\mu_n,\phi_n(x)\}_{n=1}^\infty$ are the eigenvalues and eigenfunctions of $-\Delta_D$, where
\[
-\Delta_D\phi_n=-\phi_n''=\mu_n\phi_n,
\]
\[
\mu_n=n^2\pi^2,\quad\phi_n(x)=\sqrt{2}\sin n\pi x.
\]
\begin{proposition}[Mercer's Theorem]
Assume an operator $B$ is positive definite, and its associated kernel $K(x,y)$ is a real-valued symmetric, continuous function of $x$ and $y$. Then $K$ can be expanded in a uniformly convergent series
\[
K(x,y)=\sum_{n=1}^\infty\kappa_n\phi_n(x)\phi_n(y),
\]
where $\kappa_n$ and $\phi_n$ are the eigenvalues and normalized eigenfunctions of $B$.
\end{proposition}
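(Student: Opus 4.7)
The plan is to follow the classical argument for Mercer's theorem, which upgrades the $L^2$ spectral expansion of $B$ to a uniformly convergent series by exploiting positivity and continuity of $K$.

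First, since $K$ is continuous on the compact product $\overline{\Omega}\times\overline{\Omega}$, the operator $B$ is Hilbert--Schmidt; being symmetric and positive definite it is in fact compact, self-adjoint, and has strictly positive spectrum. The spectral theorem yields an $L^2$-orthonormal basis of eigenfunctions $B\phi_n=\kappa_n\phi_n$ with $\kappa_n>0$, and continuity of each $\phi_n$ follows from the identity $\phi_n=\kappa_n^{-1}B\phi_n$ combined with continuity of $K$. The expansion $K(x,y)=\sum_n\kappa_n\phi_n(x)\phi_n(y)$ is then immediate in $L^2(\Omega\times\Omega)$ by Parseval applied to the orthonormal basis $\{\phi_n(x)\phi_m(y)\}$ of $L^2(\Omega\times\Omega)$.

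I would then set $R_N(x,y):=K(x,y)-\sum_{n=1}^N\kappa_n\phi_n(x)\phi_n(y)$, which is a continuous symmetric kernel of the positive operator $B-\sum_{n\le N}\kappa_n\phi_n\otimes\phi_n$. Two pointwise facts follow from approximating Dirac masses by continuous bumps and using positivity of the quadratic form: diagonal non-negativity $R_N(x,x)\ge 0$ at every $x$ (otherwise continuity would force $R_N<0$ on a neighborhood of the diagonal, contradicting positivity against a bump localized there), and the kernel Cauchy--Schwarz inequality $|R_N(x,y)|^2\le R_N(x,x)R_N(y,y)$ (obtained by writing the nonnegativity of the quadratic form on $\alpha\delta_x+\beta\delta_y$ and examining its discriminant). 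The first gives the Bessel-type pointwise bound $\sum_{n=1}^N\kappa_n\phi_n(x)^2\le K(x,x)$, and the second reduces the problem to showing uniform convergence on the diagonal.

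For the diagonal, the partial sums $T_N(x):=\sum_{n\le N}\kappa_n\phi_n(x)^2$ form a monotone increasing sequence of continuous functions on the compact set $\overline{\Omega}$, bounded above by the continuous function $K(x,x)$. Identifying the pointwise limit with $K(x,x)$ at every $x$ can be carried out via the square-root operator: $B^{1/2}$ has symmetric $L^2$-kernel $\tilde K$, the composition formula gives $K(x,x)=\int\tilde K(x,z)^2\,dz$, and Parseval applied to $\tilde K(x,\cdot)$ identifies the right-hand side with $\sum_n\kappa_n\phi_n(x)^2$; continuity of $K$ on the diagonal promotes this equality from almost every $x$ to every $x$. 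Dini's theorem then upgrades pointwise convergence on $\overline{\Omega}$ to uniform convergence on the diagonal, and the kernel Cauchy--Schwarz inequality propagates this to uniform convergence of $R_N\to 0$ on all of $\overline{\Omega}\times\overline{\Omega}$. The main obstacle, I expect, is precisely the identification of the diagonal limit with $K(x,x)$ at every point rather than merely almost everywhere: standard $L^2$ arguments give equality a.e., but the everywhere statement requires the full force of continuity of $K$, which is exactly why the theorem is known to fail for merely $L^2$ kernels.
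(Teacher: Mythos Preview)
The paper does not prove this proposition at all: Mercer's theorem is quoted as a classical result and immediately applied to the Green kernel $g(x,y)$, with no argument given. So there is nothing in the paper to compare your proof against.

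Your sketch is the standard route and is essentially correct, but there is one soft spot worth tightening. You propose to identify the diagonal limit $\sum_n \kappa_n\phi_n(x)^2$ with $K(x,x)$ via the square-root operator and the composition formula $K(x,x)=\int \tilde K(x,z)^2\,dz$. This requires $B^{1/2}$ to be Hilbert--Schmidt, which you do have at that stage (integrating your Bessel bound gives $\sum_n\kappa_n\le\int K(x,x)\,dx<\infty$). But the composition identity for Hilbert--Schmidt kernels holds only almost everywhere in $(x,y)$, and $\tilde K$ is merely $L^2$, so $\int\tilde K(x,z)^2\,dz$ is a priori defined only for almost every $x$; saying ``continuity of $K$ on the diagonal promotes this to every $x$'' is not quite an argument, since agreement a.e.\ with a continuous function does not force a merely measurable function to match it pointwise. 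The cleaner classical fix avoids $B^{1/2}$ entirely: for each fixed $x$ the Fourier coefficients of $K(x,\cdot)$ in the basis $\{\phi_n\}$ are $\kappa_n\phi_n(x)$, so $\sum_n\kappa_n\phi_n(x)\phi_n(y)\to K(x,y)$ in $L^2(dy)$; your Cauchy--Schwarz/Bessel bound shows the same series is uniformly Cauchy in $y$, hence converges to a continuous function of $y$ that must equal $K(x,y)$ for \emph{every} $y$. Setting $y=x$ gives the diagonal identity at every point, and then Dini and your kernel Cauchy--Schwarz finish exactly as you wrote.
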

Applying Mercer's theorem to $A=(-\Delta_D)^{-1}$ whose kernel is $g(x,y)$, we have
\begin{equation}\label{gxy}
g(x,y)=\sum_{n=1}^\infty\mu_n^{-1}\phi_n(x)\phi_n(y).
\end{equation}
It is easy to see from $(\ref{trace})$ and $(\ref{gxy})$ that
\[
\begin{split}
\tau_s(\rho)=\sum_{n_1,\cdots,n_s}\frac{1}{\pi^{2s}n_1^2\cdots n_s^2}\left(\int_0^1\phi_{n_1}(x)\phi_{n_2}(x)\rho(x)\mathrm{d}x\right)\times\cdots\times\left(\int_0^1\phi_{n_s}(x)\phi_{n_1}(x)\rho(x)\mathrm{d}x\right).
\end{split}
\]
Denote $\mathbf{M}(\rho)$ to be an infinite-dimensional matrix where
\[
\mathbf{M}_{ij}(\rho)=\frac{1}{\pi^2ij}\int_0^1\phi_{i}(x)\phi_{j}(x)\rho(x)\mathrm{d}x.
\]
Clearly, we have
\begin{equation}\label{trace1}
\sum_{k=1}^\infty \lambda_k^{-s}=\tau_s(\rho)=\text{trace}(\mathbf{M}^s(\rho)).
\end{equation}
The Fr\'echet derivative $\tau_s'$ of $\tau_s(\rho)$ at $\rho^0$ is
\[
\tau_s'[\rho^0](\delta \rho)=s\times\mathrm{trace}(\mathbf{M}^{s-1}(\rho^0)\mathbf{M}(\delta\rho)).
\]

If we use the approximation of $\rho$ under certain basis $\{\psi_m\}_{m=1}^\infty$:
\begin{equation}\label{cosine_expansion}
\rho(x)=\sum_{m=1}^M a_m\psi_m(x),
\end{equation}
and denote $\mathbf{a}=\{a_1,a_2,\cdots,a_M\}$. With a little abuse of notations, we use $\tau_s(\mathbf{a}),\,\mathbf{M}(\mathbf{a})$ in place of  $\tau_s(\rho),\,\mathbf{M}(\rho)$ in the following. Then
\[
\mathbf{M}_{ij}(\mathbf{a})=\sum_{m=1}^M 2a_m \mathbf{M}_{ij}(\mathbf{e}_m),
\]
where $\mathbf{M}_{ij}(\mathbf{e}_m)=\int_0^1\sin i\pi x\,\sin j\pi x\,\psi_m(x)\mathrm{d}x$.

 If we use truncated Fourier cosine series,
 \begin{equation}\label{cosine_expansion1}
\rho(x)=\sum_{m=1}^M a_m\cos 2(m-1)\pi x,
\end{equation}
  then
\[
\begin{split}
\mathbf{M}_{ij}(\mathbf{e}_m)=&\frac{1}{\pi^2ij}\int_0^1\sin i\pi x\sin j\pi x\cos2(m-1)\pi x\mathrm{d}x\\
=&\begin{cases}
-\frac{1}{4 \pi^2ij},\quad & i+j+2m-2=0,\\
-\frac{1}{4 \pi^2ij},\quad & i+j-2m+2=0,\\
\frac{1}{4 \pi^2ij},\quad & i-j+2m-2=0,\,m\neq 1,\\
\frac{1}{4 \pi^2ij},\quad & i-j-2m+2=0,\,m\neq 1,\\
\frac{1}{2 \pi^2ij},\quad & i=j,m=1,\\
0,\quad &\text{otherwise},
\end{cases}
\end{split}
\]
and $\mathbf{e}_m=\{a_1=0,\cdots,a_{m-1}=0,a_m=1,a_{m+1}=0,\cdots,a_M=0\}$.

Now we have
\[
\frac{\partial\tau_s}{\partial a_m}[\mathbf{a}^0]=s\times\mathrm{trace}(\mathbf{M}^{s-1}(\mathbf{a}^0)\mathbf{M}(\mathbf{e}_m)).
\]
For numerical implementation, we also need truncate $\mathbf{M}$ to be a $J\times J$ matrix.

In general, the Jacobian for the map
\[
\rho\rightarrow\{\tau_s(\rho)\}_{s=1}^S=\{\mathrm{trace}((A\circ M_\rho)^s)\}_{s=1}^S
\]
is super ill-posed. The fundamental reason is that it is {\em inappropriate} to measure the residual as
\[
\sum_{s=1}^S(\tau_s(\rho^{\mathrm{computed}})-\tau_s(\rho^{\mathrm{true}}))^2.
\]
 For a numerically more stable scheme, we will instead invert a different map
\begin{equation}\label{themap}
\rho\rightarrow\{\mathrm{trace}(\mathcal{P}_n(A\circ M_\rho))\}_{n=1}^N,
\end{equation}
for properly chosen polynomials $\{\mathcal{P}_n\}_{n=1}^N$, with $\mathcal{P}_n(0)=0$. The following proposition, immediately from $(\ref{trace1})$, is the corner stone for the algorithm:
\begin{proposition}
If $\mathcal{P}_n$ is a polynomial with $\mathcal{P}_n(0)=0$. Then
\begin{equation}
\sum_{k=1}^\infty \mathcal{P}_n(\lambda_k^{-1})=\mathrm{trace}(\mathcal{P}_n(A\circ M_\rho))=\mathrm{trace}(\mathcal{P}_n(\mathbf{M}(\mathbf{a}))).
\end{equation}
\end{proposition}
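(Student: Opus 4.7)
The proposition is a direct linearity consequence of the already-established identity \eqref{trace1}, so my plan is short.

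First, since $\mathcal{P}_n(0)=0$, I would write $\mathcal{P}_n(z)=\sum_{s=1}^{d_n}c_s^{(n)}z^s$ with no constant term. This is the crucial structural assumption: it ensures every term of $\mathcal{P}_n(A\circ M_\rho)$ is a power $(A\circ M_\rho)^s$ with $s\geq 1$, which is of trace class by the reasoning in the previous subsection (using that $A$ is Hilbert--Schmidt, the composition $A\circ M_\rho$ is Hilbert--Schmidt, and so $(A\circ M_\rho)^s$ for $s\geq 2$ is trace class, with the $s=1$ case also trace class since $A$ itself has a continuous kernel).

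Next, I would apply linearity of the trace to obtain
\[
\mathrm{trace}(\mathcal{P}_n(A\circ M_\rho))=\sum_{s=1}^{d_n}c_s^{(n)}\,\mathrm{trace}((A\circ M_\rho)^s)=\sum_{s=1}^{d_n}c_s^{(n)}\tau_s(\rho),
\]
and identically for the matrix version
\[
\mathrm{trace}(\mathcal{P}_n(\mathbf{M}(\mathbf{a})))=\sum_{s=1}^{d_n}c_s^{(n)}\,\mathrm{trace}(\mathbf{M}^s(\mathbf{a})).
\]
Invoking \eqref{trace1}, each $\tau_s(\rho)$ equals both $\sum_{k}\lambda_k^{-s}$ and $\mathrm{trace}(\mathbf{M}^s(\rho))$, so the two right-hand sides agree and both equal $\sum_{s=1}^{d_n}c_s^{(n)}\sum_{k=1}^\infty\lambda_k^{-s}$.

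Finally, I would swap the order of the finite $s$-sum and the infinite $k$-sum; this interchange is immediate because $\lambda_k\sim k^2\pi^2$ yields absolute summability of $\sum_k\lambda_k^{-s}$ for every $s\geq 1$, and $d_n$ is finite. The swap gives
\[
\sum_{s=1}^{d_n}c_s^{(n)}\sum_{k=1}^\infty\lambda_k^{-s}=\sum_{k=1}^\infty\sum_{s=1}^{d_n}c_s^{(n)}\lambda_k^{-s}=\sum_{k=1}^\infty\mathcal{P}_n(\lambda_k^{-1}),
\]
which completes the three-way equality. There is no real obstacle here; the only thing to be careful about is justifying that each composition lies in the trace class so the trace is well-defined and Lidskii applies, which is exactly why the hypothesis $\mathcal{P}_n(0)=0$ is stated.
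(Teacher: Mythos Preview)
Your proposal is correct and follows exactly the route the paper intends: the paper simply states that the proposition is ``immediately from \eqref{trace1}'', and your argument is precisely the natural unpacking of that remark via linearity of the trace and a finite/infinite sum interchange. The additional care you take in noting why $\mathcal{P}_n(0)=0$ guarantees each term is trace class is a welcome elaboration, but there is no difference in approach.
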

Here $\mathcal{P}_n(\mathbf{M}(\mathbf{a}))$ is some infinite-dimensional matrix, and its trace is just the sum of its diagonal entries.
One can see that computing $\mathcal{P}_n(\mathbf{M}(\mathbf{a}))$ just needs basic matrix operations.

\begin{remark}
We emphasize here that the choice of proper polynomials $\{\mathcal{P}_n\}_{n=1}^N$ is crucial for the algorithm. We want the associated Jacobian to be less ill-posed. The monomials $\{x,x^2,\cdots\}$ are actually very bad choices, and would lead to a failure of the algorithm.
\end{remark}

We will use the (shifted) Chebyshev polynomials on $[0,1]$ given by recursive formulas:
\[
T_{n+1}(x) = (4x-2)T_n(x)-T_{n-1}(x),\quad\quad n\geq 1,
\]
with
\[
T_0(x)=1, \quad\quad T_1(x)=2x-1.
\]
 Chebyshev polynomials have the property that
\[
\max_{x\in[0,1]}\vert T_n(x)\vert=1,
\]
which is favorable to our need.

Denote $\widetilde{T}_n(x)=xT_{n-1}(x)$, then we have the recursive formula
\[
\widetilde{T}_{n+1}(x)=(4x-2)\widetilde{T}_{n}(x)-\widetilde{T}_{n-1}(x)
\]
with
\[
\widetilde{T}_1(x)=x,\quad\quad \widetilde{T}_2(x)=2x^2-x.
\]
We will choose $\widetilde{T}_1,\widetilde{T}_2,\cdots, \widetilde{T}_N$ as the polynomials for our algorithm. Therefore, essentially we will be inverting the map
\[
\mathbf{a}\rightarrow\{\mathrm{trace}(\widetilde{T}_n(\mathbf{M}(\mathbf{a})))\}_{n=1}^N.
\]

For the use of Chebyshev polynomials, we first choose a positive number $t$ slightly smaller than $\lambda_1^{-1}$ to rescale $A\circ M_\rho$ to $t(A\circ M_\rho)$, such that $\lambda_{\max}(t(A\circ M_\rho))$ is slight smaller than $1$. The matrix $\mathbf{M(a)}$ in the following is the rescaled one. For convenience of understanding, one can just assume the first eigenvalue $\lambda_1$ is slightly larger than $1$.

For the evaluation of the forward map and its Jacobian, we will use the following recursive relations:
\[
\mathrm{trace}(\widetilde{T}_{n+1}(\mathbf{M}(\mathbf{a}))=\mathrm{trace}((4\mathbf{M}(\mathbf{a})-2)\widetilde{T}_{n}(\mathbf{M}(\mathbf{a})))-\mathrm{trace}(\widetilde{T}_{n-1}(\mathbf{M}(\mathbf{a}))),
\]
and
\[
\begin{split}
\frac{\partial\mathrm{trace}(\widetilde{T}_{n+1}(\mathbf{M}(\mathbf{a})))}{\partial a_m}&=\mathrm{trace}\left(\frac{\partial\widetilde{T}_{n+1}(\mathbf{M}(\mathbf{a}))}{\partial a_m}\right)\\
&=\mathrm{trace}\left(4\frac{\partial \mathbf{M}(\mathbf{a})}{\partial a_m}\widetilde{T}_{n}(\mathbf{M}(\mathbf{a}))+(4\mathbf{M}(\mathbf{a})-2)\frac{\partial \widetilde{T}_n(\mathbf{M}(\mathbf{a}))}{\partial a_m}\right)\\
&\quad\quad\quad-\mathrm{trace}\left(\frac{\partial \widetilde{T}_{n-1}(\mathbf{M}(\mathbf{a}))}{\partial a_m}\right).
\end{split}
\]
Notice
\[
\frac{\partial \mathbf{M}(\mathbf{a})}{\partial a_m}=2\mathbf{M}(\mathbf{e}_m).
\]
The scheme of the algorithm is then summarized in Algorithm \ref{algorithm}.\\

\begin{algorithm}
\caption{Inversion of trace formulas with Chebyshev polynomials}\label{algorithm}
\begin{algorithmic}[1]
\STATE precompute $\mathbf{M}(\mathbf{e}_m)$, traces $r_1^{true}=\sum_{k=1}^\infty\widetilde{T}_1(\lambda_k^{-1}),\ldots, r_N^{true}=\sum_{k=1}^\infty\widetilde{T}_N(\lambda_k^{-1})$
\STATE given initial guess $\mathbf{a}_0$
\FOR{$1\leq n\leq$ max number of iterations}
\STATE form $\widetilde{T}_1(\mathbf{M}(\mathbf{a}_{n-1}))$, $\widetilde{T}_2(\mathbf{M}(\mathbf{a}_{n-1}))$, $\frac{\partial \widetilde{T}_{1}(\mathbf{M}(\mathbf{a}_{n-1}))}{\partial a_m}$, $\frac{\partial \widetilde{T}_{2}(\mathbf{M}(\mathbf{a}_{n-1}))}{\partial a_m}$
\STATE $r_{1}=\textbf{trace}(\widetilde{T}_{1}(\mathbf{M}(\mathbf{a}_{n-1})))$
\STATE $r_{2}=\textbf{trace}(\widetilde{T}_{2}(\mathbf{M}(\mathbf{a}_{n-1})))$
\FOR{$1\leq m\leq M$}
\STATE $J_{1,m}=\textbf{trace}\left(\frac{\partial\widetilde{T}_{1}(\mathbf{M}(\mathbf{a}))}{\partial a_m}\right)$
\STATE $J_{2,m}=\textbf{trace}\left(\frac{\partial\widetilde{T}_{2}(\mathbf{M}(\mathbf{a}))}{\partial a_m}\right)$
\ENDFOR
\FOR{$2\leq j\leq N-1$}
\STATE $\widetilde{T}_{j+1}(\mathbf{M}(\mathbf{a}_{n-1}))=(4\mathbf{M}(\mathbf{a}_{n-1})-2)\widetilde{T}_{j}(\mathbf{M}(\mathbf{a}_{n-1}))-\widetilde{T}_{j-1}(\mathbf{M}(\mathbf{a}_{n-1}))$
\FOR{$1\leq m\leq M$}
\STATE $\frac{\partial\widetilde{T}_{j+1}(\mathbf{M}(\mathbf{a}))}{\partial a_m}=8\mathbf{M}(\mathbf{e}_m)\widetilde{T}_{j}(\mathbf{M}(\mathbf{a}_{n-1}))+(4\mathbf{M}(\mathbf{a}_{n-1})-2)\frac{\partial \widetilde{T}_j(\mathbf{M}(\mathbf{a}_{n-1}))}{\partial a_m}-\frac{\partial \widetilde{T}_{j-1}(\mathbf{M}(\mathbf{a}_{n-1}))}{\partial a_m}$
\ENDFOR
\STATE $r_{j+1}=\textbf{trace}(\widetilde{T}_{j+1}(\mathbf{M}(\mathbf{a}_{n-1})))$
\FOR{$1\leq m\leq M$}
\STATE $J_{j+1,m}=\textbf{trace}\left(\frac{\partial\widetilde{T}_{j+1}(\mathbf{M}(\mathbf{a}_{n-1}))}{\partial a_m}\right)$
\ENDFOR
 \ENDFOR
 \STATE compute $\delta \mathbf{a}$ using Jacobi $\mathbf{J}=(J_{j,m})_{N\times M}$ and residual $\mathbf{r}^{true}-\mathbf{r}=(r_j^{true}-r_j)_{N\times 1}$
 \STATE $\mathbf{a}_n=\mathbf{a}_{n-1}+\delta\mathbf{a}$
 \ENDFOR
\end{algorithmic}
\end{algorithm}
\subsection{Details on the algorithm}
~\\

\noindent \textbf{Numerical trace computation.} For the computation of $\sum_{k=1}^\infty \widetilde{T}_n(\lambda_k^{-1})$ for each $n$, we also need to first use the similar recursive scheme to compute $\widetilde{T}_n(\lambda_k^{-1})$, and then do the summation over $m$. One shall {\bf never} first calculate $\tau_s=\sum_{k=1}^\infty \lambda_k^{-s},\, s=1,\cdots, n$ and then compute $a_n\tau_n+a_{n-1}\tau_{n-1}+\cdots+a_1\tau_1$ using the explicit form of $\widetilde{T}_n(x)=a_nx^n+a_{n-1}x^{n-1}+\cdots+a_1x$, although those coefficients $\{a_\beta\}_{\beta=1}^n$ can be precomputed. The main reasons are: 1) the coefficients $\{a_\beta\}_{\beta=1}^n$ for high order Chebyshev polynomials vary drasically in magnitude, so one will easily encounter overflow/underflow problems; 2) the sum $\sum_{k=1}^\infty\lambda^{-s}_k$ for large $s$ will be dominated by the sum of the first few eigenvalues, rounding the ones following. This is a typical issue for numerically adding a large number and a small one.\\

\noindent \textbf{Use of Chebyshev polynomials. } If we use $K$ eigenvalues as the data, basically we need the highest degree of used Chebyshev polynomials $N=\mathcal{O}(K^2)$. The reason is that we need to make sure the chosen Chebyshev polynomials can discriminate all those eigenvalues, and we have the estimates $\frac{\lambda_K}{\lambda_1}\sim\mathcal{O}(K^2)$. We remark here that the use of \textbf{ALL} Chebyshev polynomials with degree lower than $N$ is totally superfluous. However, we do not know what are the ``optimal" choices of Chebyshev polynomials. This is subject to further investigation, and we simply use all the polynomials for the numerical experiments presented later.\\


\noindent \textbf{Traces using finite spectral data. } Note that traces involve infinite sums
\[
\mathrm{trace}(\mathcal{P}_n(A\circ M_\rho))=\sum_{k=1}^K\mathcal{P}_n(\lambda_k^{-1})+\sum_{k=K+1}^\infty\mathcal{P}_n(\lambda_k^{-1}).
\]
If only $K$ eigenvalues are known, we can approximate $\lambda_k$ for $k\geq K+1$ using the asymptotics \cite{rundell1992reconstruction1}
\[
\lim_{k\rightarrow+\infty}\frac{\lambda_k}{k^2\pi^2}=L^{-2}.
\]
We first get an approximation $\widetilde{L}$ of $L$ from the first $K$ eigenvalues, and then use
\[
\sum_{k=K+1}^{K_1}\mathcal{P}_n\left(\frac{\widetilde{L}^2}{k^2\pi^{2}}\right)
\]
to approximate $\sum_{k=K+1}^{\infty}\mathcal{P}_n(\lambda_k^{-1})$ with $K_1>K$. So basically, we are using $K$ ``true" eigenvalues and $K_1-K$ ``approximated eigenvalues".\\

\noindent \textbf{Multistep optimization.}
It is natural to start with a small number of basis functions, solve the optimization problem, and use the result as the initial guess for the next iteration with more basis functions. It is a good strategy to avoid local minimums. And thus we also do not need to worry too much about that we did not include the important constraint:
\[
\rho(x)>0,
\]
for optimization.

Figure \ref{multistep1} shows a two-step reconstruction for the function $\rho(x)=1-0.3e^{-20(x-0.5)^2}$. We first use only $K=3$ eigenvalues and $M=3$ basis functions to do an approximation. Then we use the approximation as the initial guess for the second-step reconstruction with $K=7$ eigenvalues and $M=7$ basis functions.\\
\begin{figure}[ht]
\includegraphics[height=6cm]{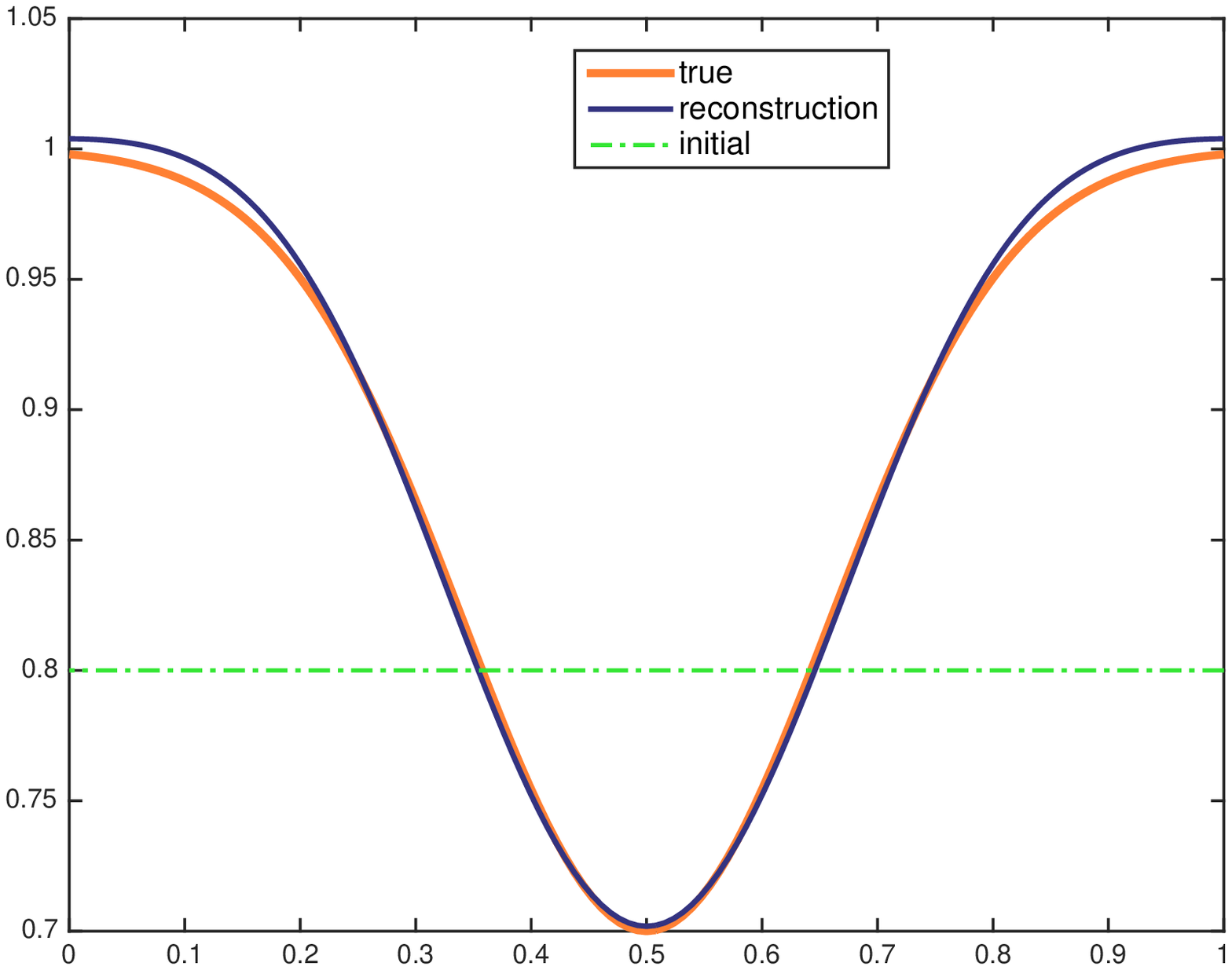}~~
\includegraphics[height=6cm]{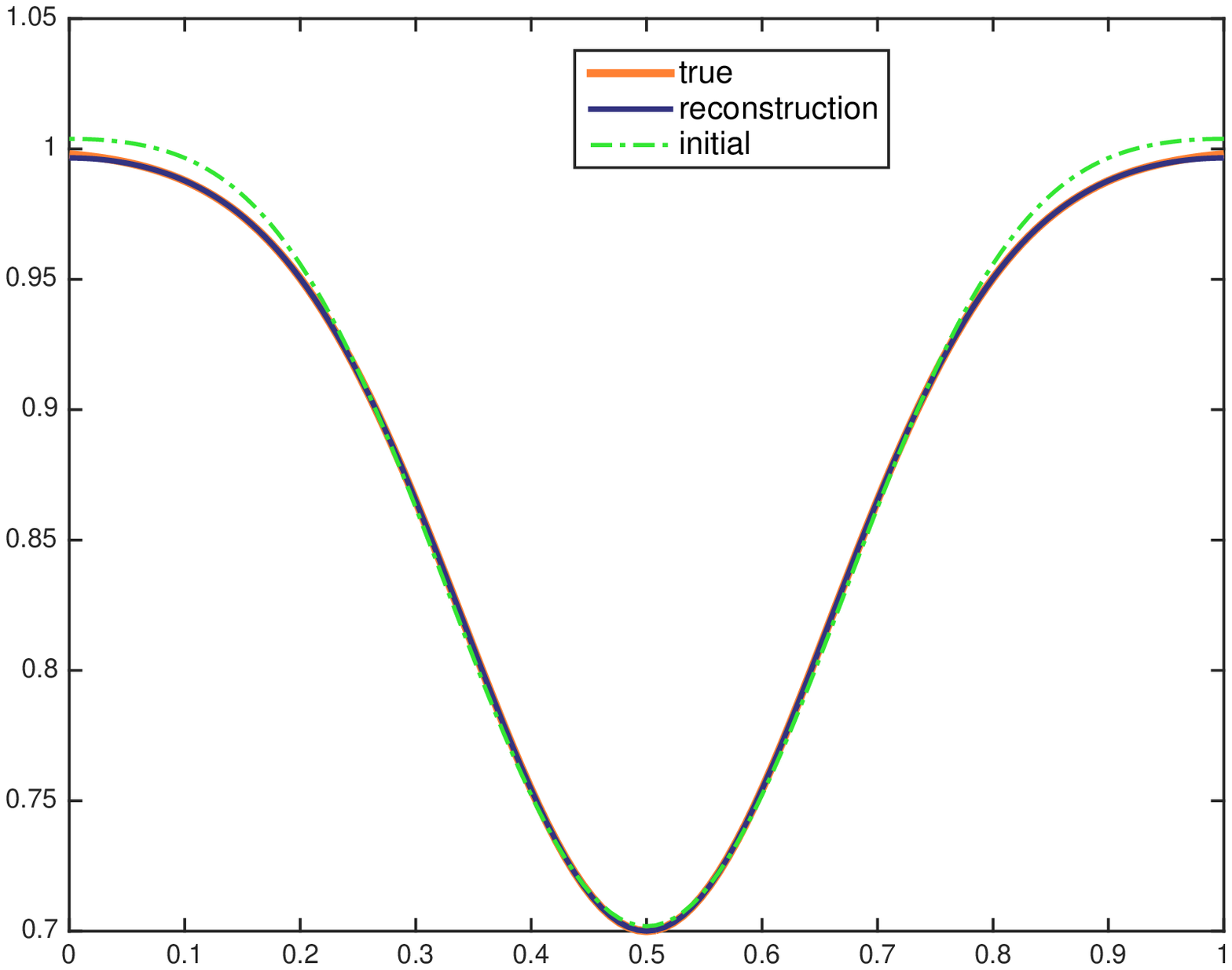}
\caption{Left: $M=K=3, \,J=3,\, N=20$; Right: $M=K=7,\, J=15, \,N=300$. Notice that the reconstructed profile on the left is the initial guess for the reconstruction on the right.}\label{multistep1}
\end{figure}

\noindent \textbf{Condition number.}
We generate Jacobian matrices with $N=1000$ Chebyshev polynomials at $\rho(x)\equiv 1$. The condition numbers, for $M=1,\,2,\cdots,\, 10$, are plotted in Figure \ref{cond_number}, compared with the values of $M^2$, $M^3$ and $M^4$. We can see that the condition number is approximately of order $\mathcal{O}(M^3)$. This is only slightly worse than the condition number $\mathcal{O}(M^2)$ for the method introduced in \cite{lowe1992recovery}. But again, we remark here that probably one can choose other polynomials to result in a better condition number.
\begin{figure}[ht]
\includegraphics[height=8cm]{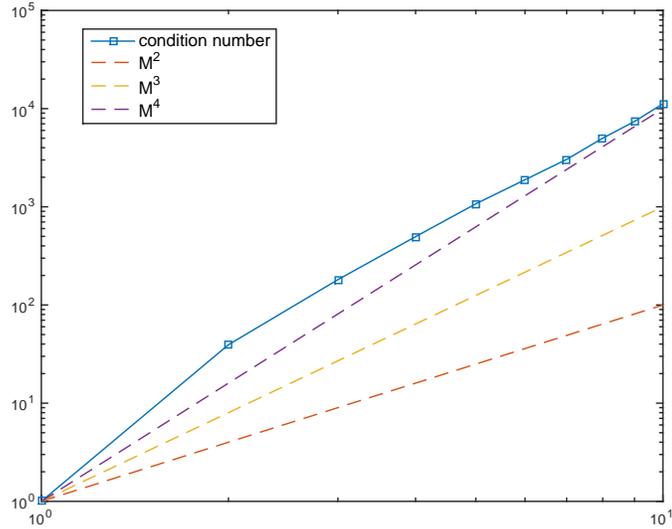}
\caption{Condition number of the Jacobian at $\rho\equiv 1$}\label{cond_number}
\end{figure}
\section{Numerical experiment}\label{numerical}

In this section we generate synthetic data to conduct some numerical experiments. To get ``exact" eigenvalues, we use Chebyshev pseudospectral collocation discretization of the Laplacian operator $\Delta=\frac{\mathrm{d}^2}{\mathrm{d}x^2}$, using Trefethen's $\mathtt{cheb.m}$ routine \cite{trefethen2000spectral}:
\lstinputlisting{discretizationLaplace.m}
We use $200$ Chebyshev points to discretize Laplacian. At least $40$ percent of generated eigenvalues are ``exact". The number of reliable eigenvalues that can be computed via different numerical discretizations is discussed in \cite{zhang2015many}.

For all computations, Gauss-Newton is used as the optimization algorithm with tolerance set to $10^{-5}\times N$. For every example, multistep optimization is taken. The initial guess for the first step is close to the mean of the true density $\rho(x)$.

The parameters in the algorithm are listed in Table \ref{Symbol} with the typical choices of their values. Some choices have been explained above, some will be explained below.

\begin{table}[h]
  \begin{tabular}{|c | c | c |}
  \hline
 notation & parameter& typical choice of value\\
  \hline
$K$ & number of ``true" eigenvalues used & --------- \\
    \hline
 $M$&number of basis functions& $M=K$\\
  \hline
  $J$ & $J\times J$: size of truncated matrix $\mathbf{M}$ & $J>K$\\
  \hline
    $N$ & highest degree of Chebyshev polynomials  & $N=\mathcal{O}(K^2)$\\
  \hline
  $K_1$ &$K_1-K$: number of ``approximate" eigenvalues used& $K_1=J$\\
  \hline
  \end{tabular}
  \caption{Parameters for the algorithm}\label{Symbol}
\end{table}

 \subsection{Example 1}
 For the first experiment, we recover the density function
 \[
\rho_1(x)=1+0.3\cos 2\pi x+0.2\cos 4\pi x+0.15\cos 6\pi x-0.1\cos 8\pi x-0.05\cos 10\pi x+0.02\cos 12\pi x,
\]
which is a linear combination of the first $M=7$ Fourier cosine functions. We will use exactly $7$ Fourier cosine functions to do the reconstruction. We take the number of eigenvalues used $K$ to be equal to $M=7$ to maximize the amount of information utilized. We will use $N=200$ Chebyshev polynomials, and the matrix $\mathbf{M}$ is truncated to be of size $J\times J$. We study how different values of $J$ affect the behavior of the algorithm. For the approximation of traces using only $7$ eigenvalues, we take $K_1 =J$. The numerical results are shown in Figure \ref{E1}.

We list the first $K=7$ ``exact" eigenvalues and the eigenvalues for the reconstructed densities in Table \ref{T1}. One can see more eigenvalues for the reconstructed density are close enough to the ``exact" ones with increasing $J$. Therefore we need to take $J>M=K$ to do reconstruction.
\begin{figure}
    \begin{subfigure}[t]{0.42\textwidth}
      \includegraphics[width=\textwidth]{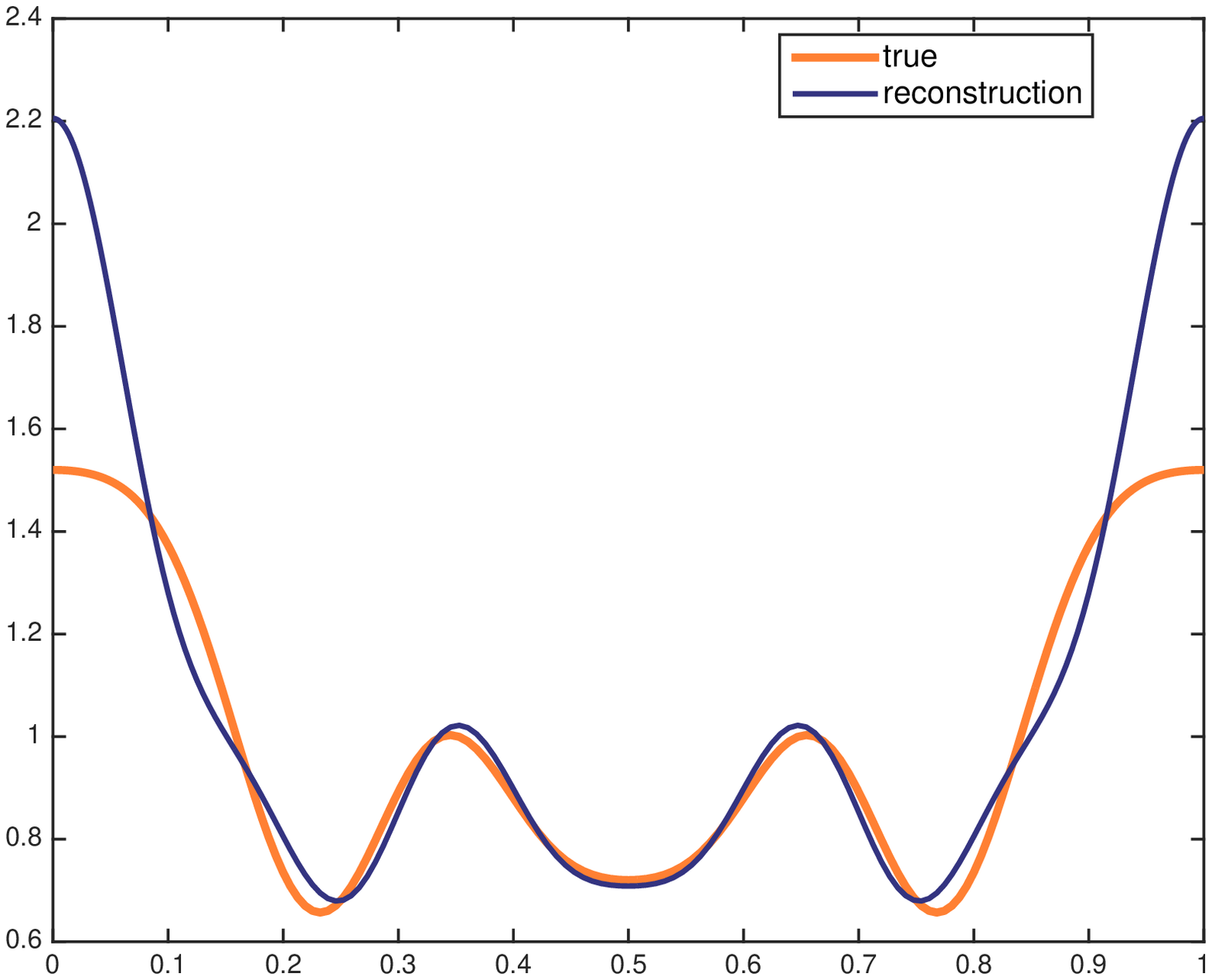}
      \caption{recovery with $J=7$}
    \end{subfigure}
    \begin{subfigure}[t]{0.42\textwidth}
      \includegraphics[width=\textwidth]{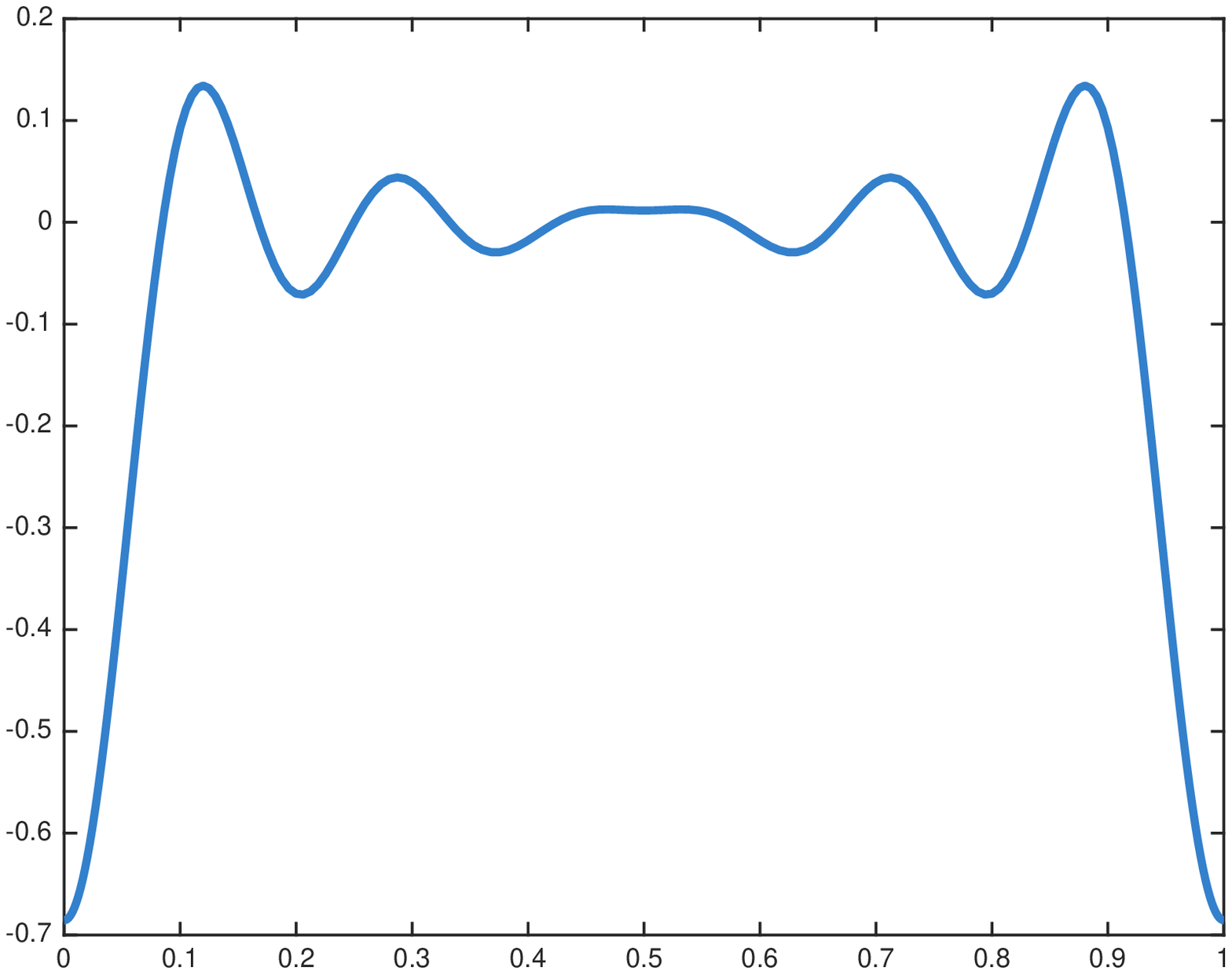}
      \caption{mismatch with $J=7$}
    \end{subfigure}
     \begin{subfigure}[t]{0.42\textwidth}
      \includegraphics[width=\textwidth]{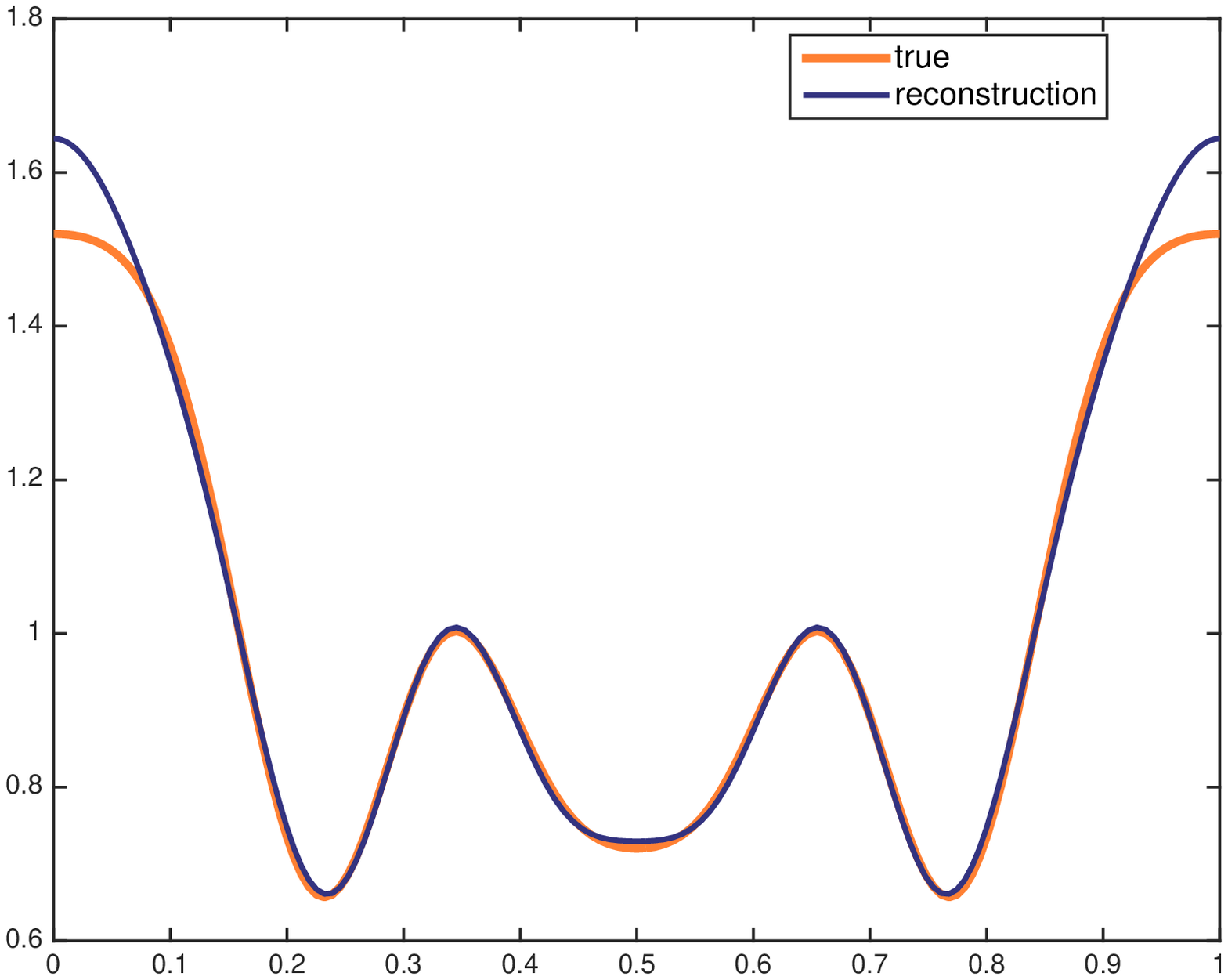}
      \caption{recovery with $J=10$}
    \end{subfigure}
    \begin{subfigure}[t]{0.42\textwidth}
      \includegraphics[width=\textwidth]{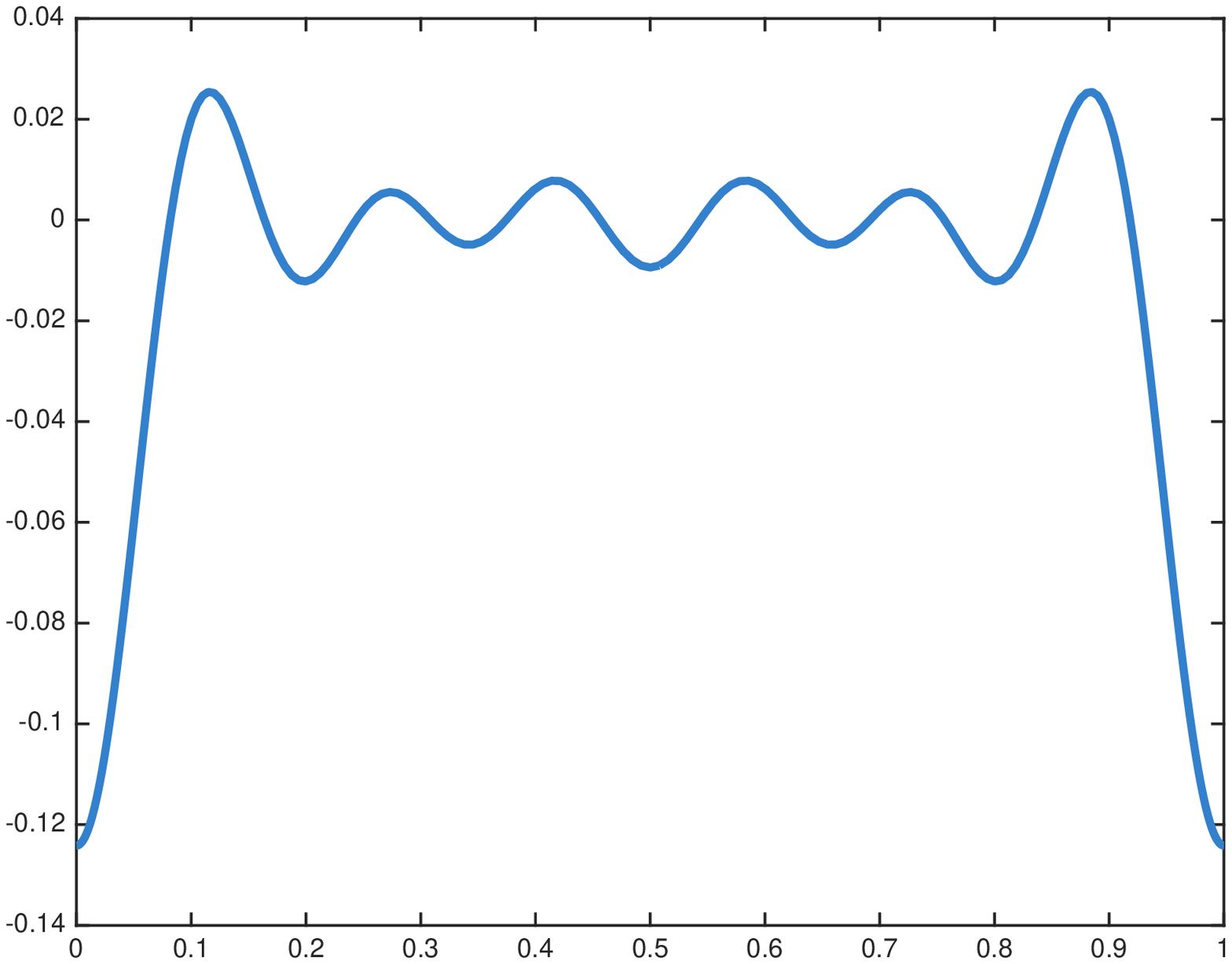}
      \caption{mismatch with $J=10$}
    \end{subfigure}
         \begin{subfigure}[t]{0.42\textwidth}
      \includegraphics[width=\textwidth]{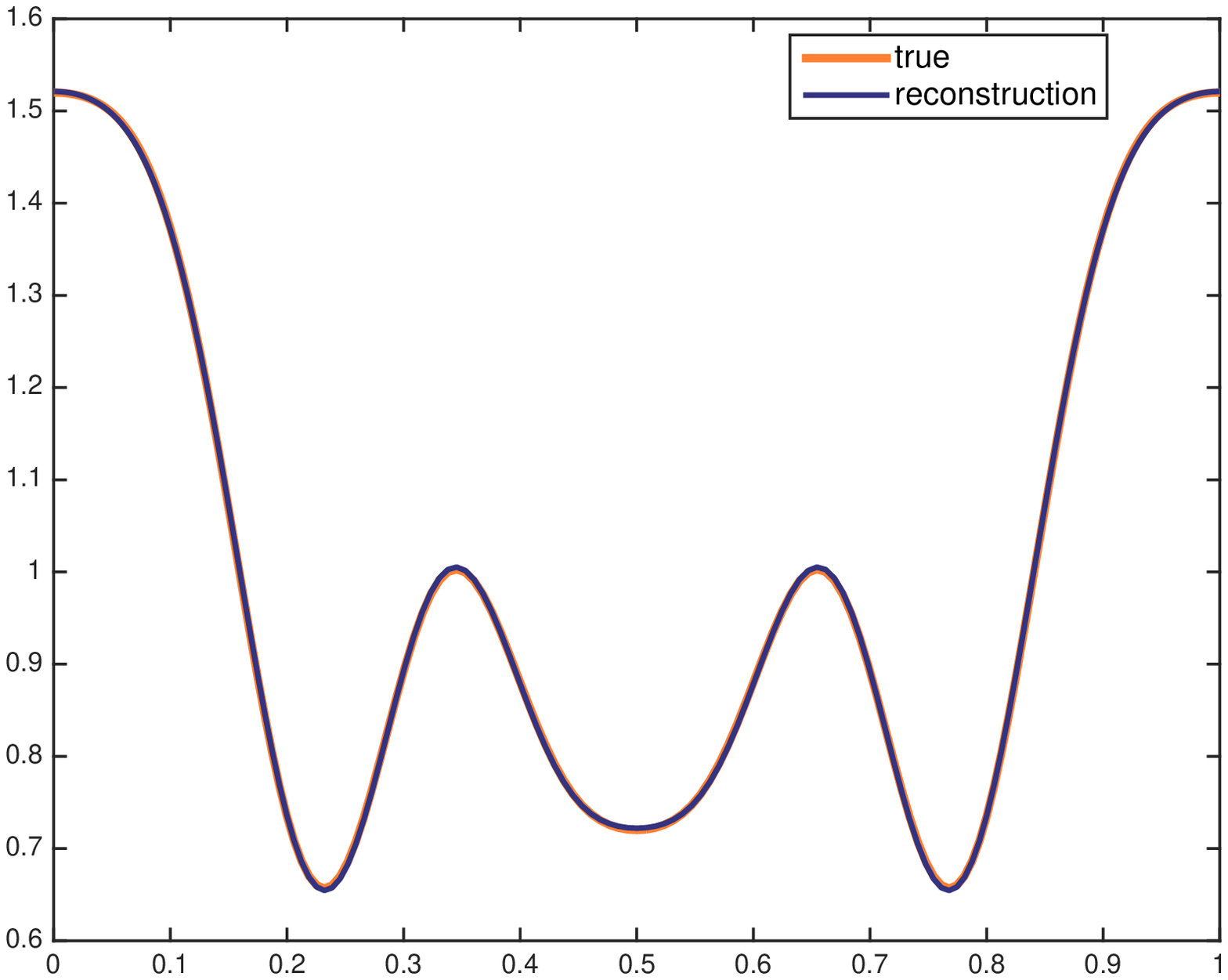}
      \caption{recovery with $J=15$}
    \end{subfigure}
    \begin{subfigure}[t]{0.42\textwidth}
      \includegraphics[width=\textwidth]{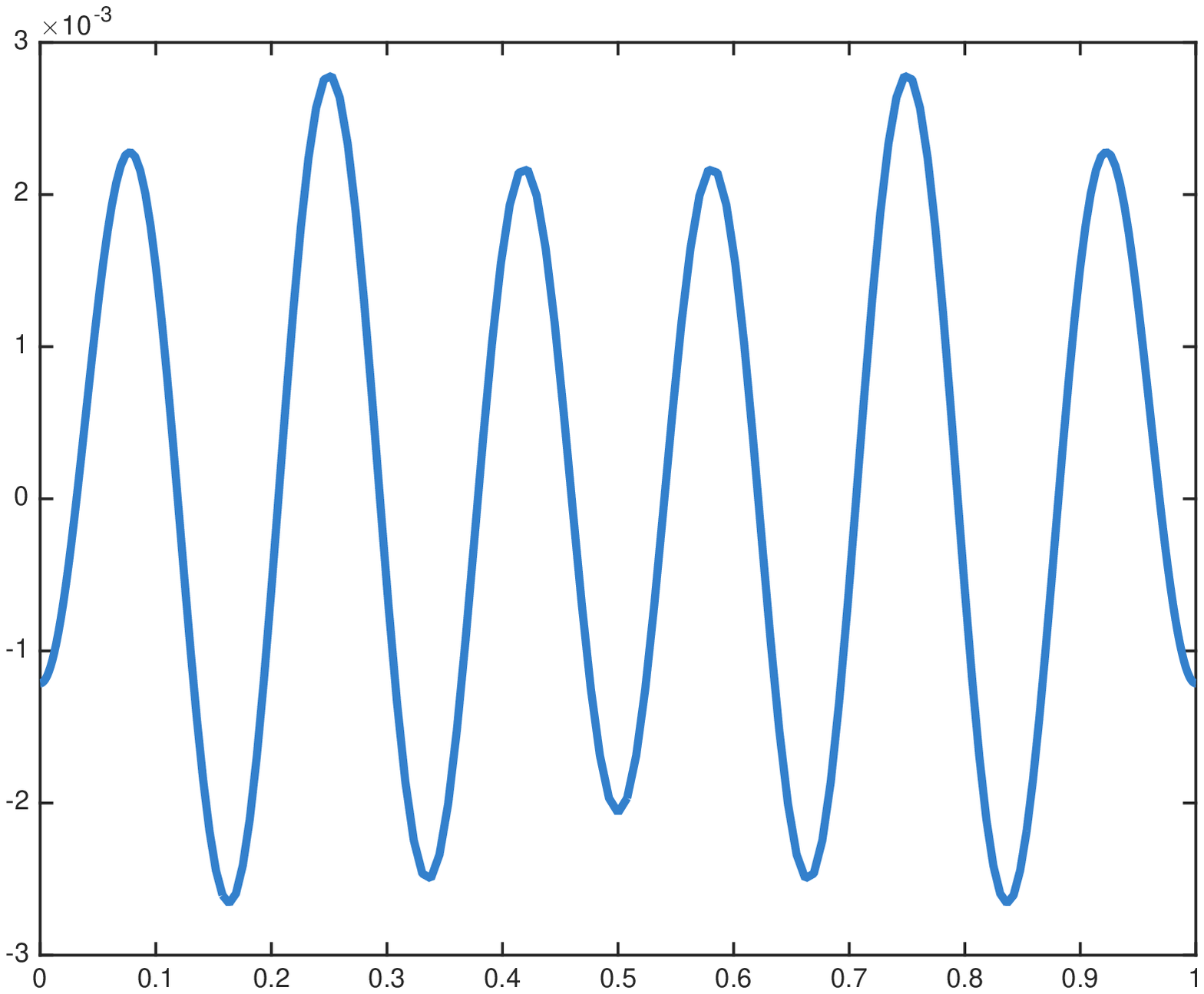}
      \caption{mismatch with $J=15$}
    \end{subfigure}
    \caption{Recovery of $\rho_1(x)$ with $J=7,10,15$.}\label{E1}
  \end{figure}

\begin{table}[h]
  \begin{tabular}{|c | c | c | c | c | c | c | c |}
  \hline
 &  &  \multicolumn{2}{c|}{$J=7$}&  \multicolumn{2}{c|}{$J=10$}& \multicolumn{2}{c|}{$J=15$}\\
    \hline
   & true &  eigenvalue & mismatch&  eigenvalue & mismatch&  eigenvalue & mismatch \\ \hline
$\lambda_1$  &11.6001 &11.5999   & 0.0003  & 11.6000 &   0.0002   &11.6001   & 0.0000\\
  $\lambda_2$&43.5488& 43.5010  &  0.0478  & 43.5483   & 0.0004  & 43.5486    &0.0002\\
 $\lambda_3$&93.3770&  93.1545   & 0.2225  & 93.3736  &  0.0034  & 93.3736  &  0.0034\\
$\lambda_4$&148.5702&  147.1728  &  1.3974&  148.5736  & -0.0034 & 148.5586   & 0.0117\\
 $\lambda_5$&253.724& 249.2582  &  4.4657 & 252.6704  &  1.0535  &253.8159  & -0.0920\\
  $\lambda_6$&373.8342&351.9368  & 21.8974&  371.0629  &  2.7713 & 374.2342  & -0.4000\\
 $\lambda_7$& 493.2769&463.8748  & 29.4020&  487.0575  &  6.2194  &493.0034  &  0.2735\\
  \hline
  \end{tabular}
  \caption{True eigenvalues and the eigenvalues for reconstructed densities.}\label{T1}
\end{table}

\subsection{Example 2}
For the second experiment, we recover
\[
\rho_2(x)=1+(x-0.5)^2,
\]
using $K=5,\,10,\,15$ eigenvalues. We will use $M=K$ Fourier cosine functions to do the each reconstruction. We use $N=1000$ Chebyshev polynomials, and the matrix $\mathbf{M}$ is truncated to be of size $20\times 20$ ($J=20$). The numerical results are shown in Figure \ref{E2}. 

We also compare the reconstructed density and the Fourier approximation (with the same number of basis functions) of the true density $\rho_2(x)$. Results are shown in Figure \ref{E3}. We see that the misfit of the reconstruction with the Fourier approximation of the density is smaller than with the exact density. 

\begin{figure}
    \begin{subfigure}[t]{0.42\textwidth}
      \includegraphics[width=\textwidth]{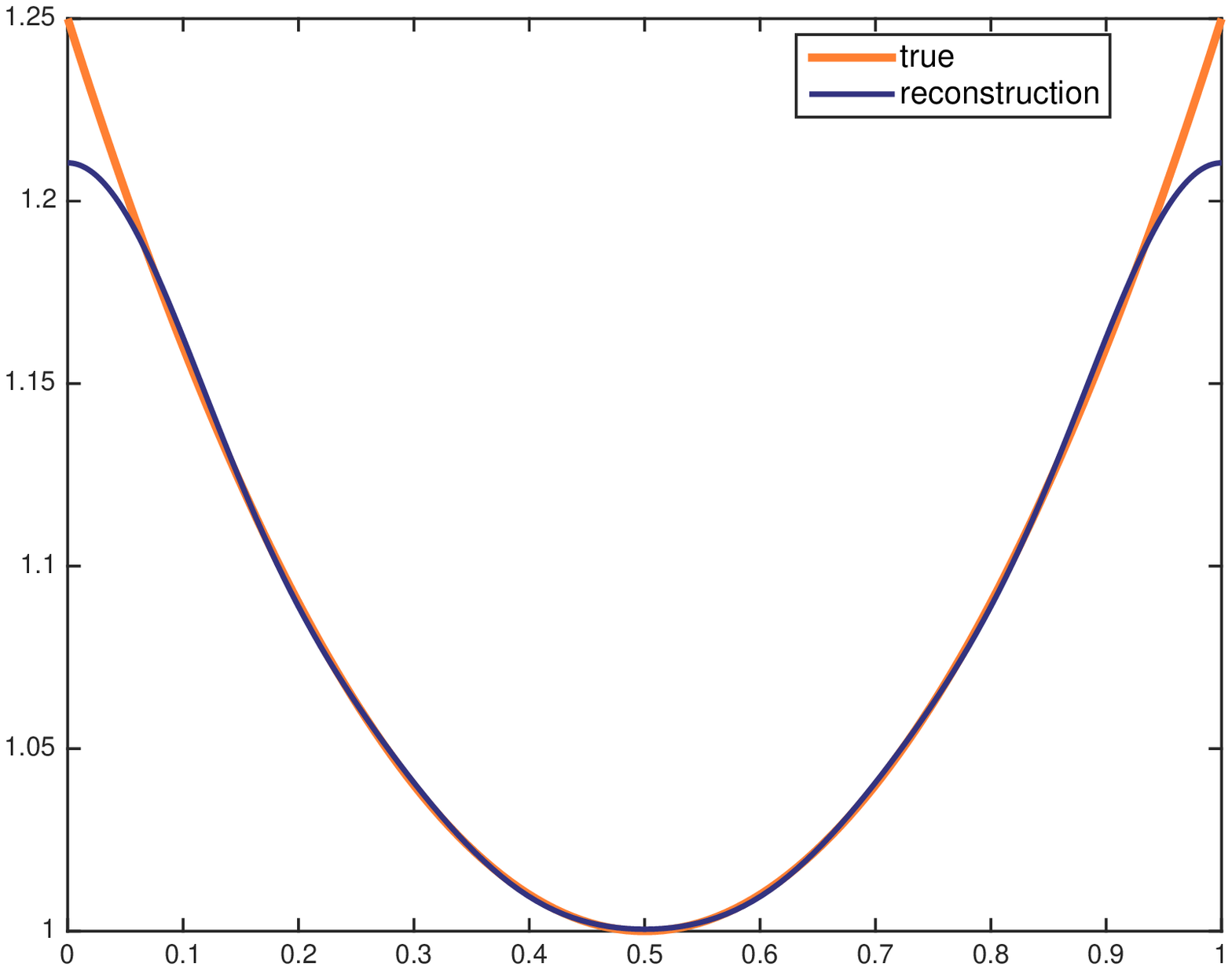}
      \caption{recovery using $5$ eigenvalues}
    \end{subfigure}
    \begin{subfigure}[t]{0.42\textwidth}
      \includegraphics[width=\textwidth]{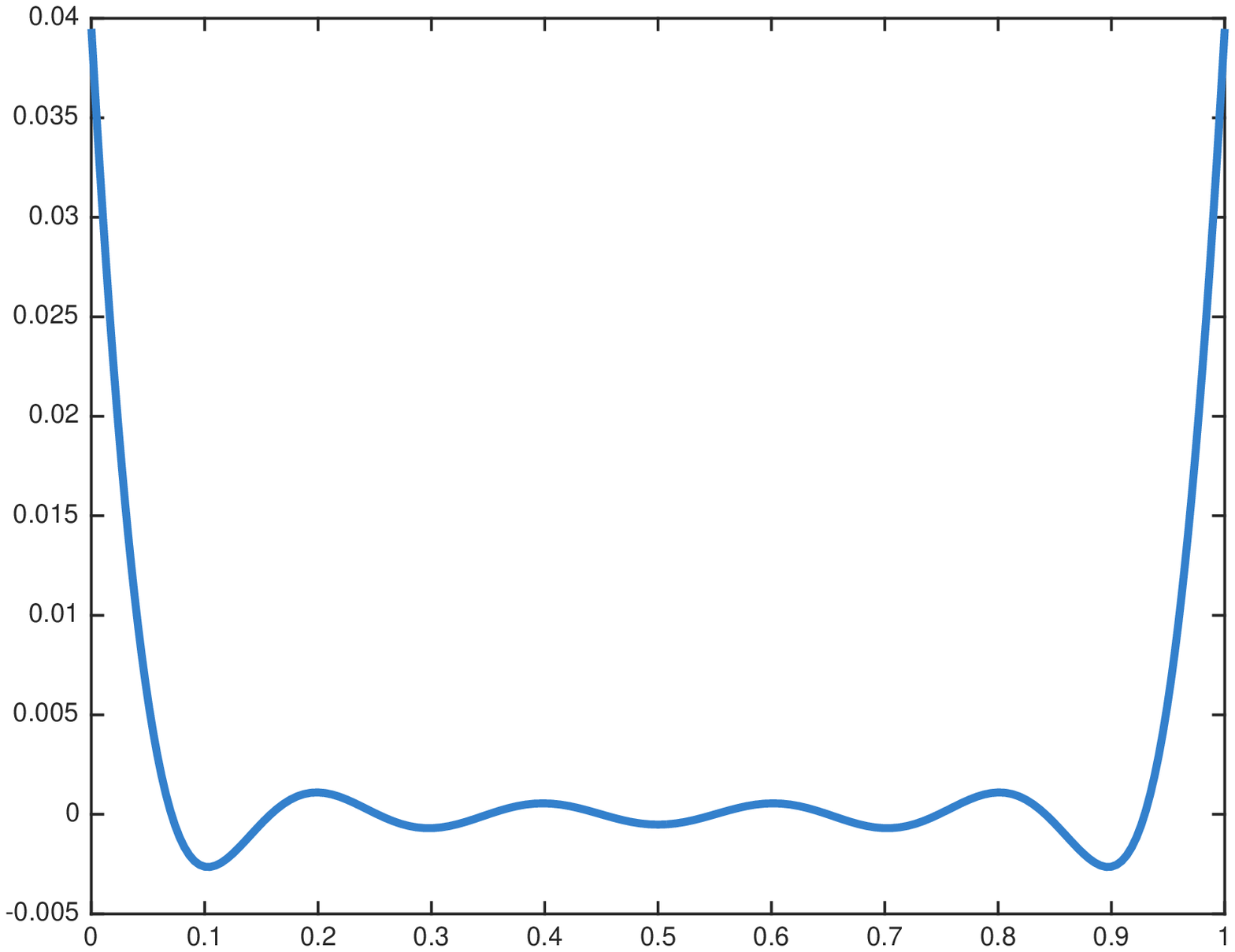}
      \caption{mismatch using $5$ eigenvalues}
    \end{subfigure}
         \begin{subfigure}[t]{0.42\textwidth}
      \includegraphics[width=\textwidth]{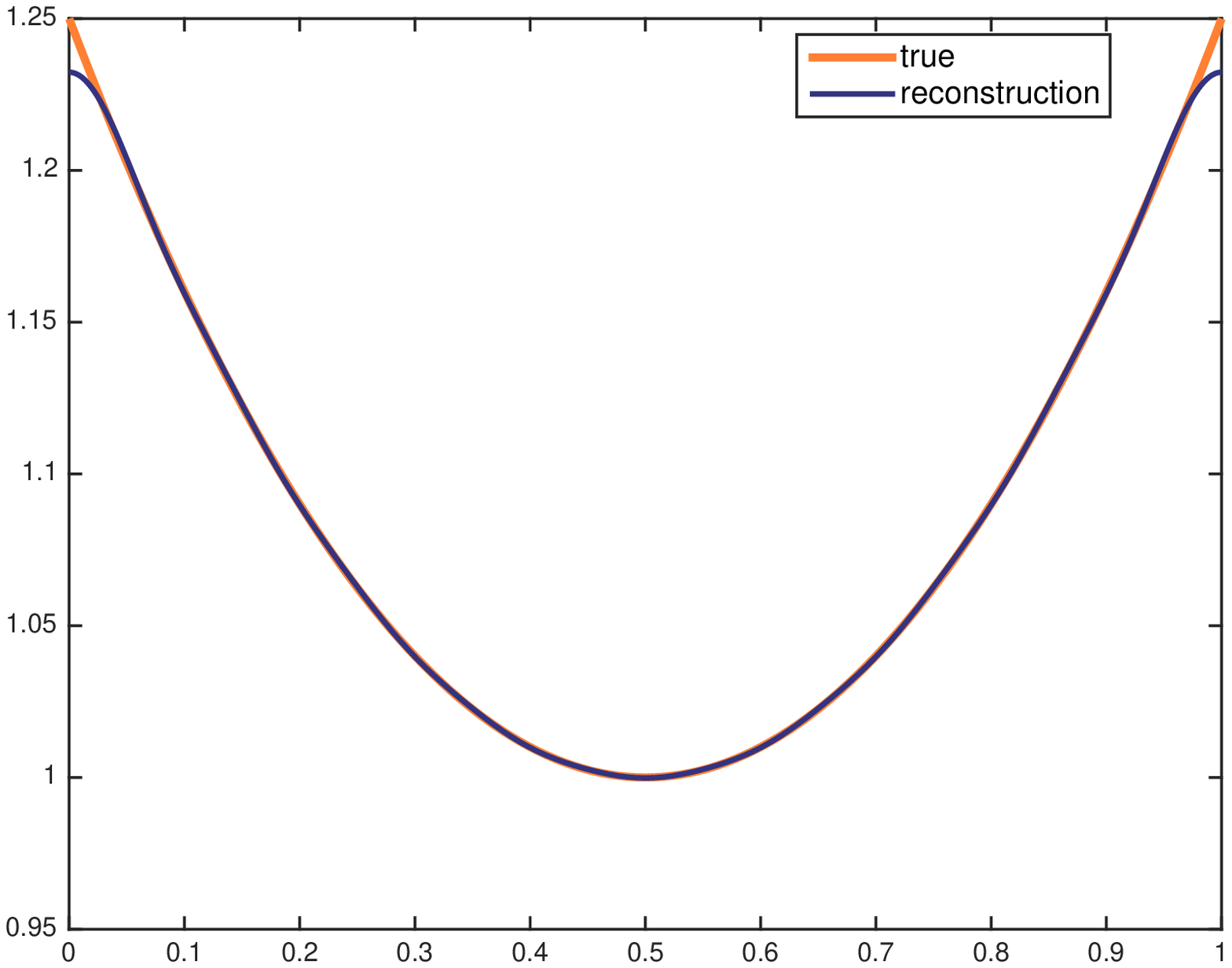}
      \caption{recovery using $10$ eigenvalues}
    \end{subfigure}
    \begin{subfigure}[t]{0.42\textwidth}
      \includegraphics[width=\textwidth]{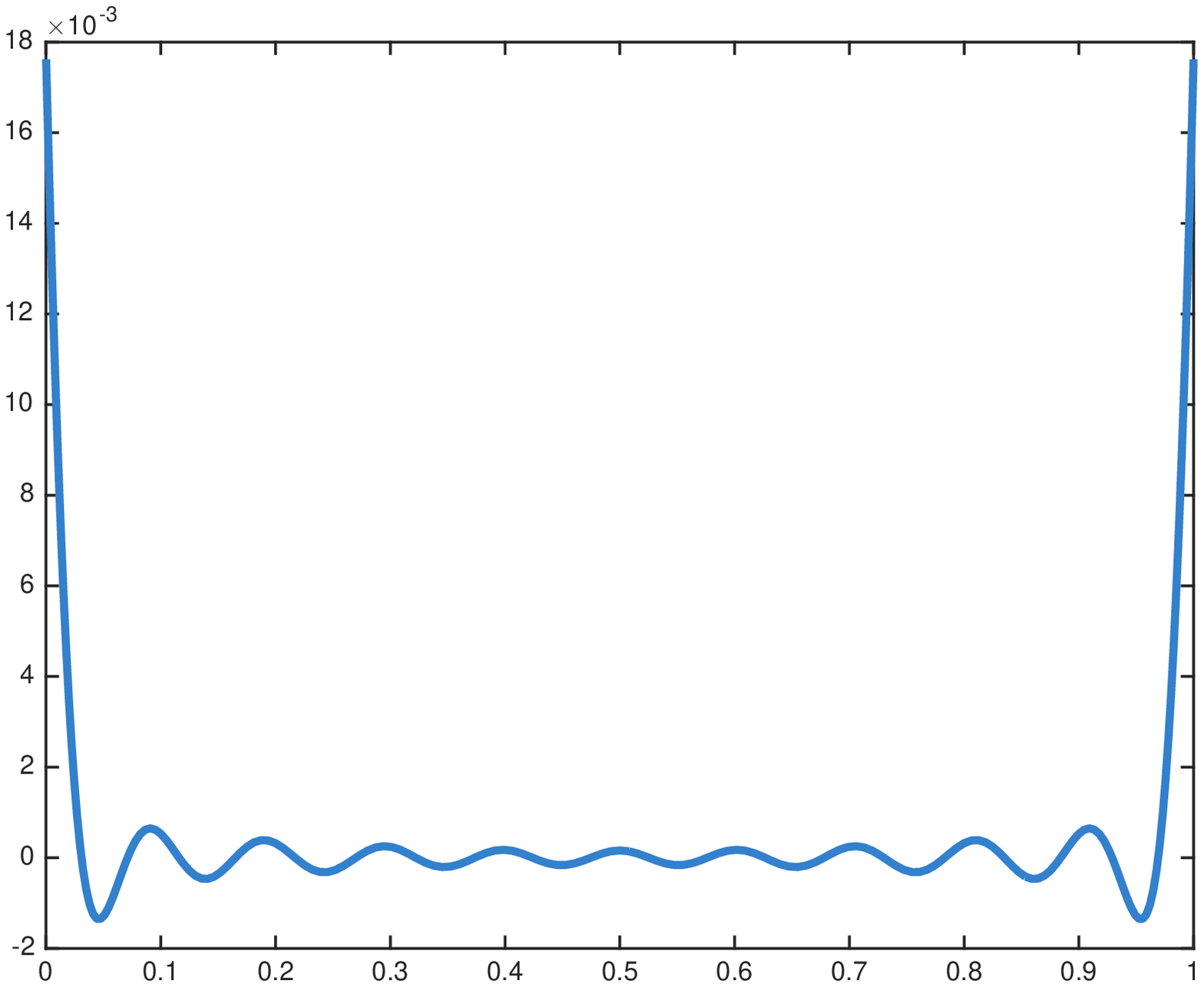}
      \caption{mismatch using $10$ eigenvalues}
    \end{subfigure}
       \begin{subfigure}[t]{0.42\textwidth}
      \includegraphics[width=\textwidth]{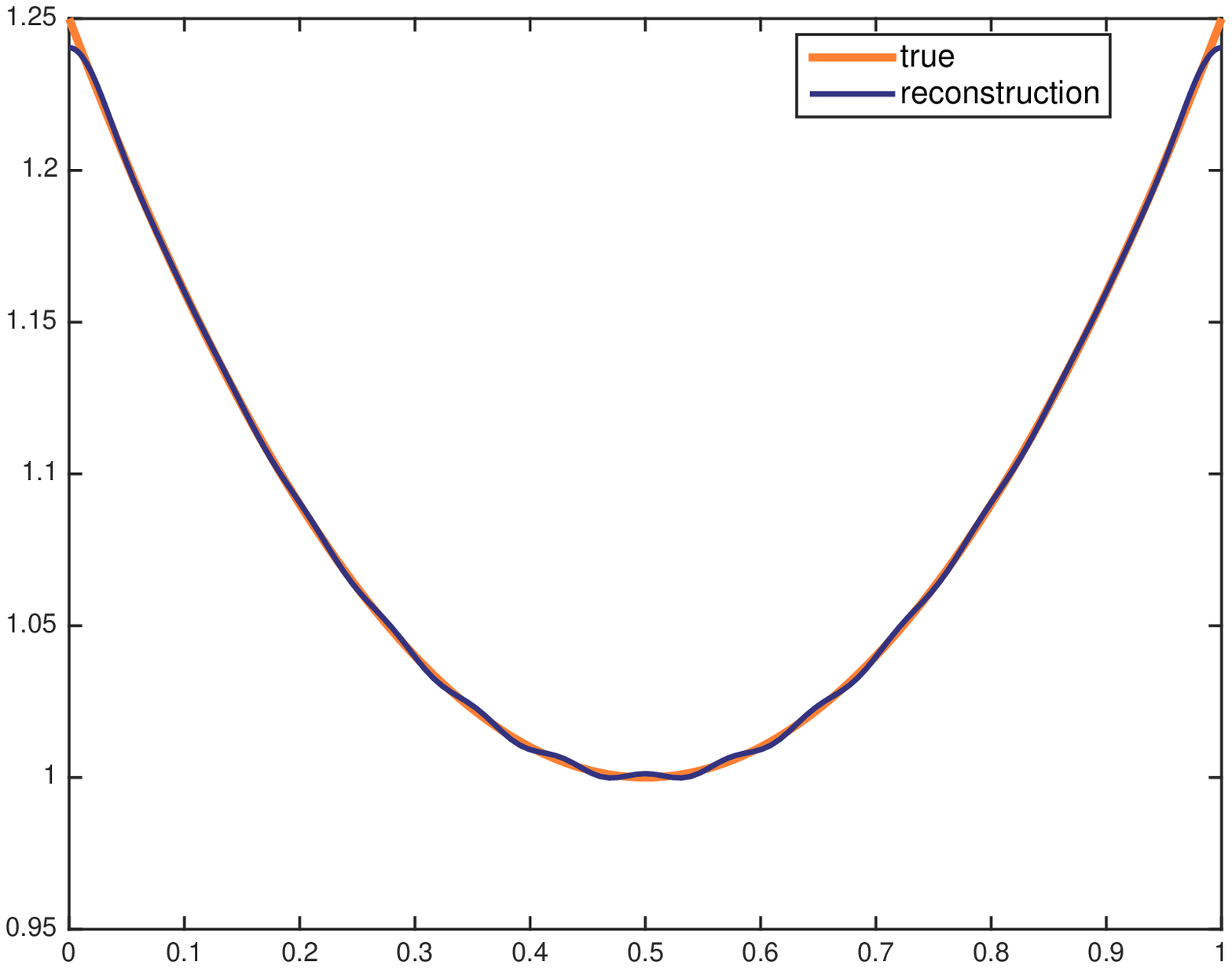}
      \caption{recovery using $15$ eigenvalues}
    \end{subfigure}
    \begin{subfigure}[t]{0.42\textwidth}
      \includegraphics[width=\textwidth]{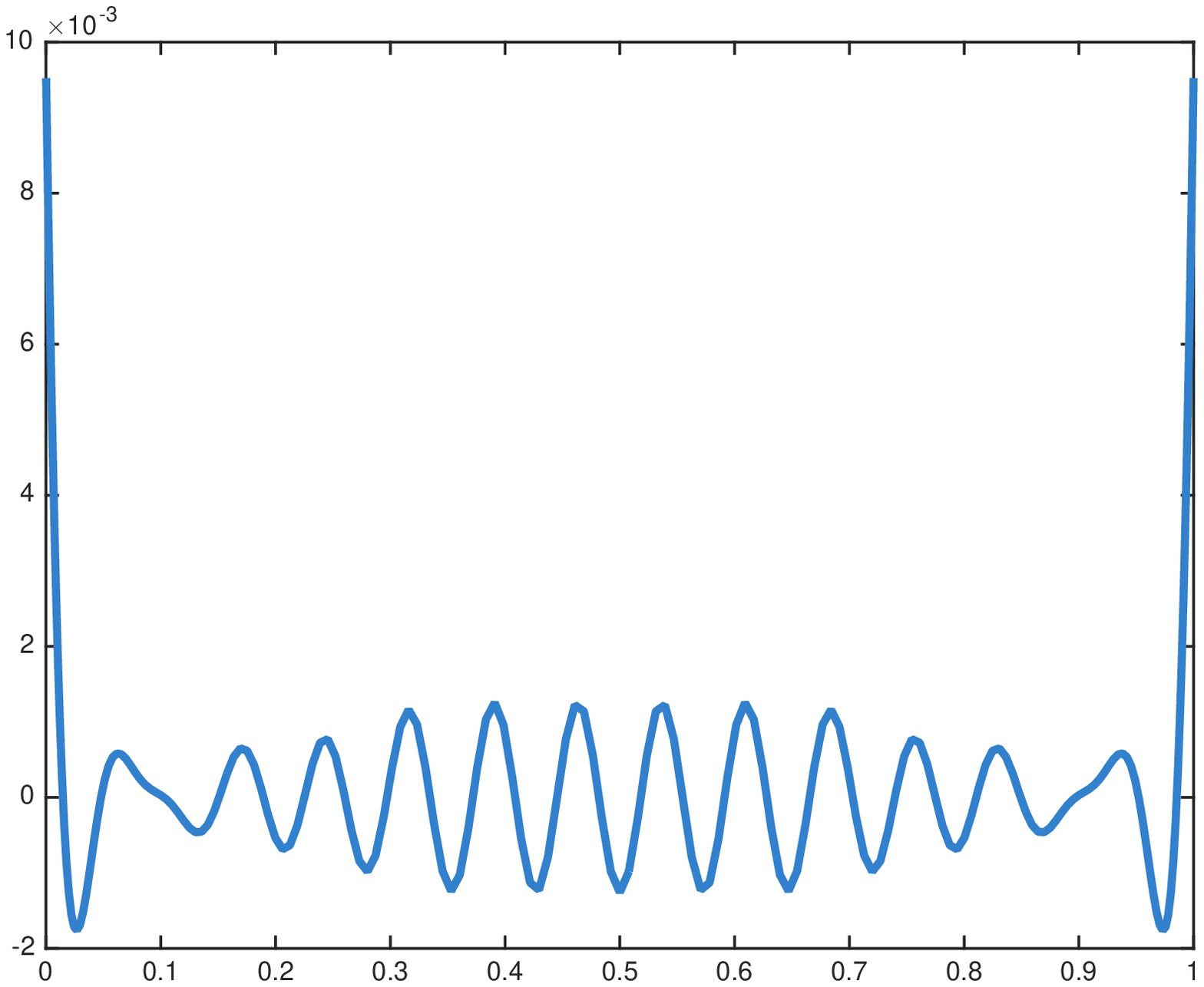}
      \caption{mismatch using $15$ eigenvalues}
    \end{subfigure}
    \caption{Recovery of $\rho_2(x)$ using $5,\, 10,$ and $15$ eigenvalues.}\label{E2}
  \end{figure}

\begin{figure}
           \begin{subfigure}[t]{0.42\textwidth}
      \includegraphics[width=\textwidth]{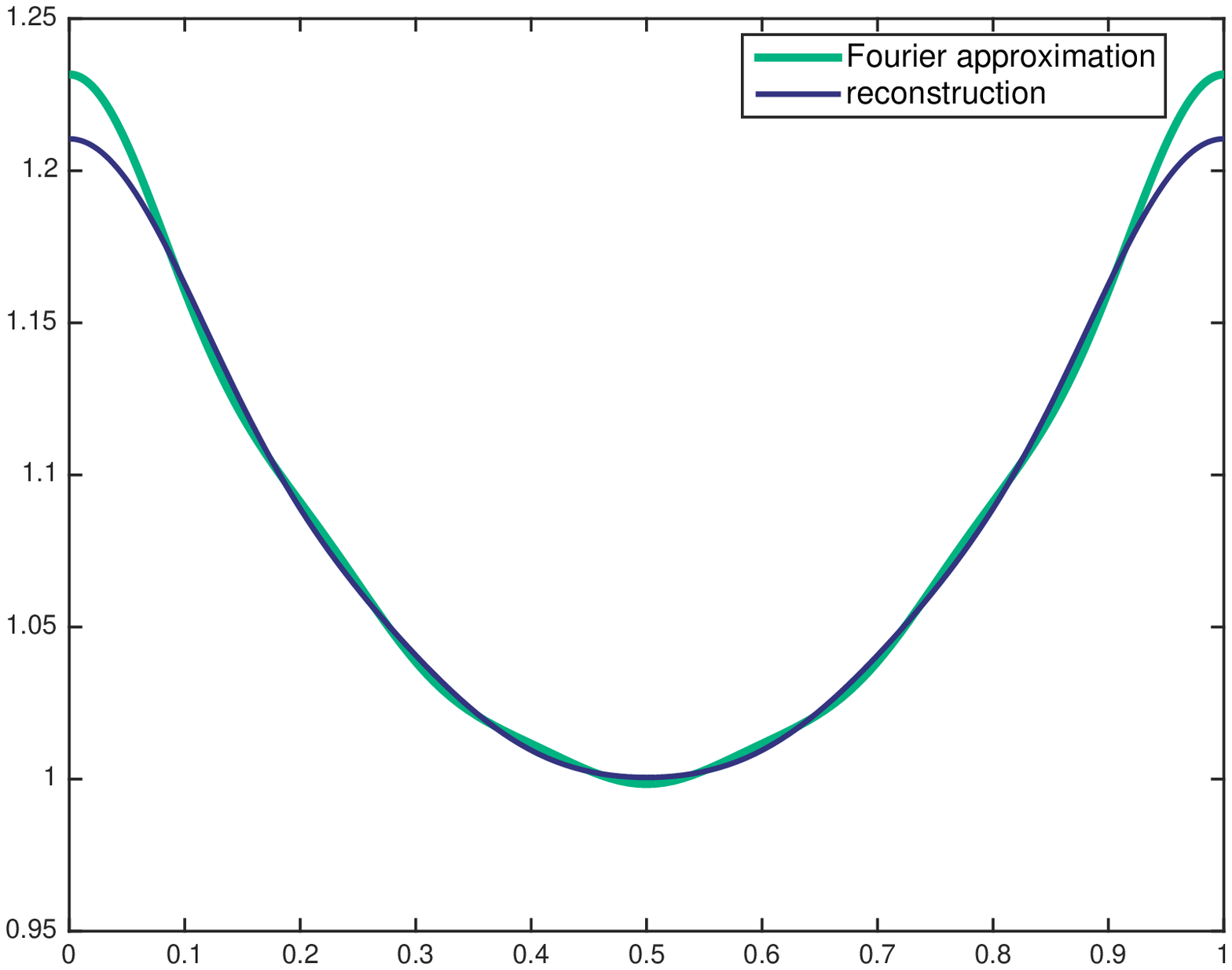}
      \caption{reconstruction and Fourier approximation with $M=K=5$}
    \end{subfigure}
    \begin{subfigure}[t]{0.42\textwidth}
      \includegraphics[width=\textwidth]{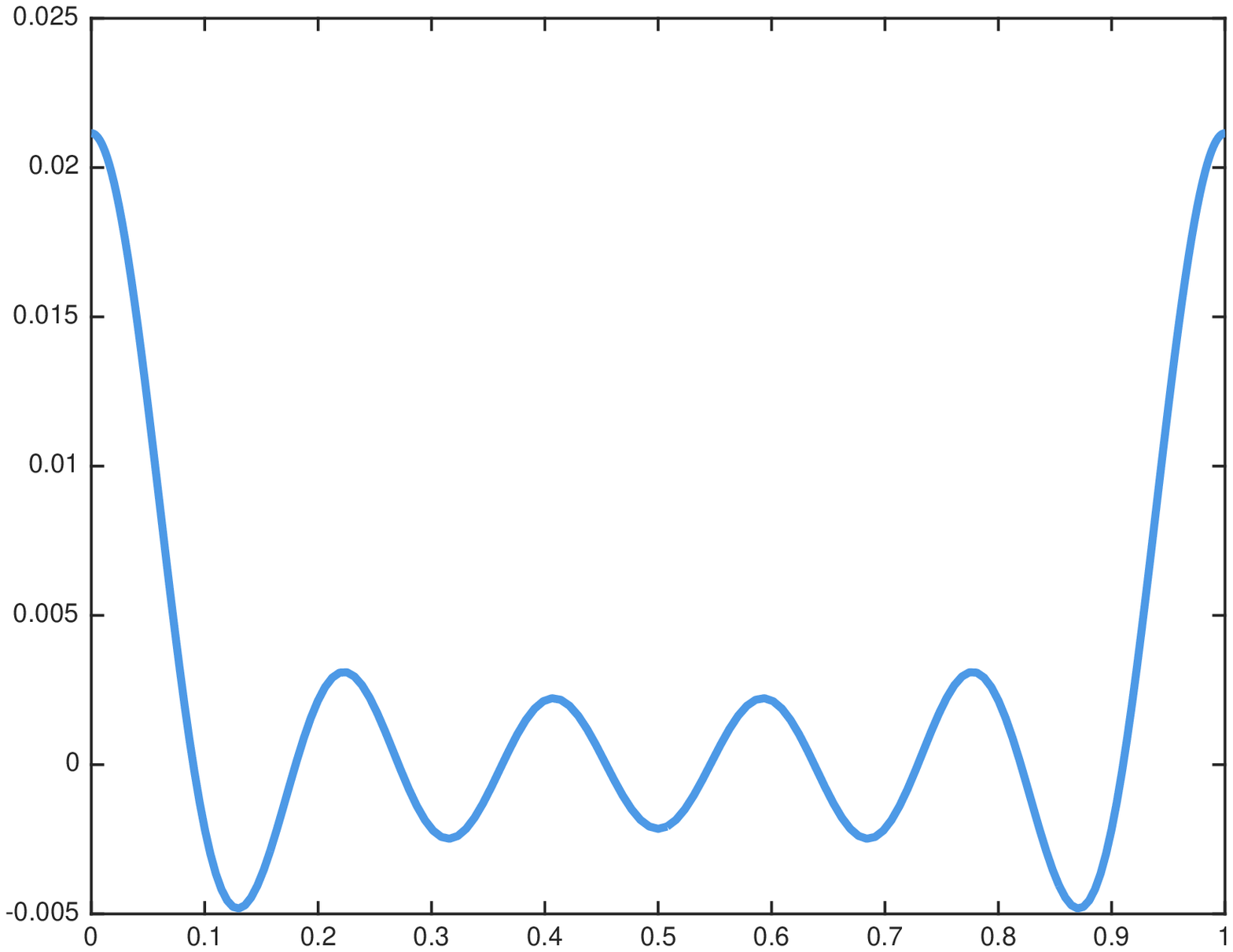}
      \caption{their difference}
    \end{subfigure}
       \begin{subfigure}[t]{0.42\textwidth}
      \includegraphics[width=\textwidth]{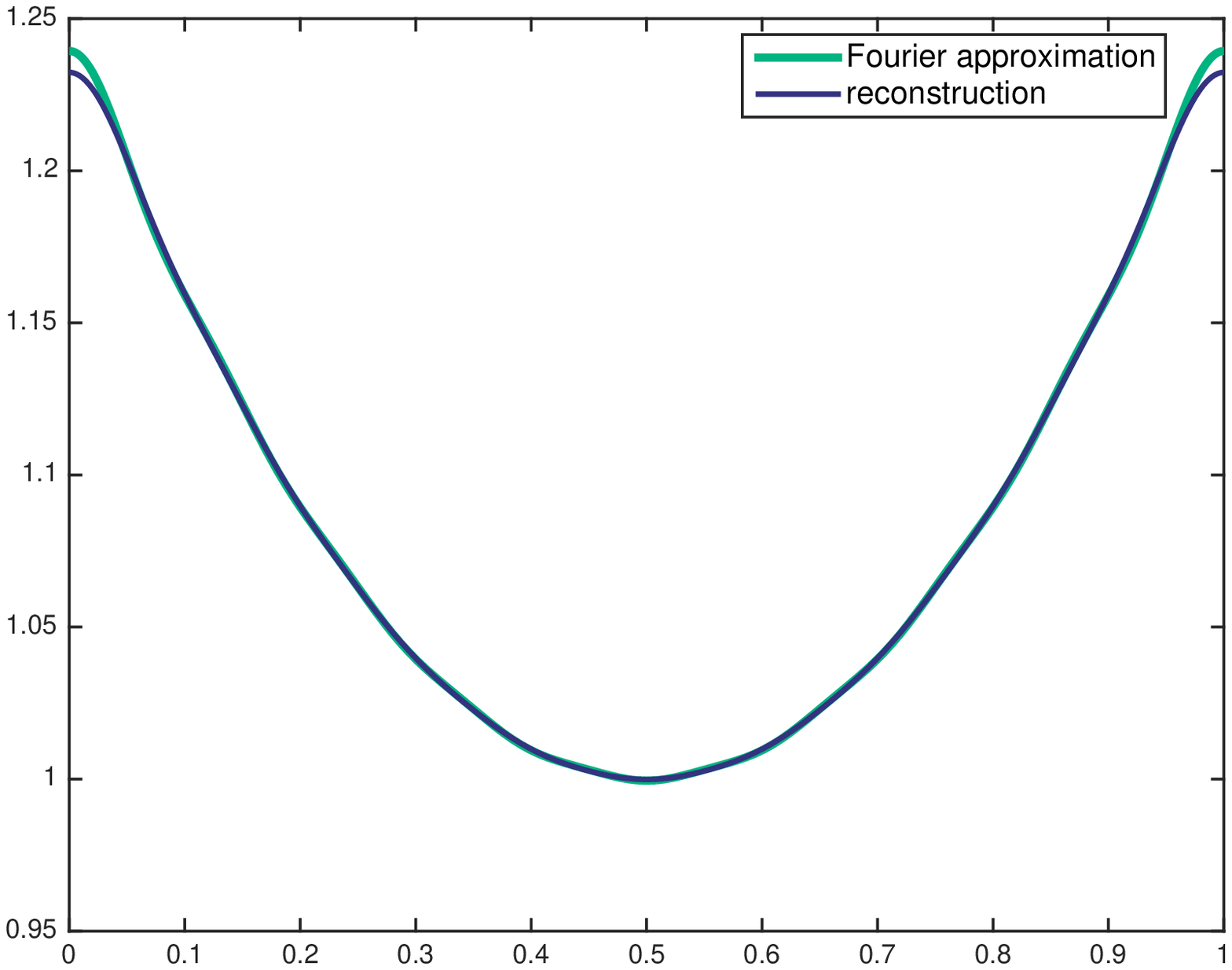}
      \caption{reconstruction and Fourier approximation with $M=K=10$}
    \end{subfigure}
    \begin{subfigure}[t]{0.42\textwidth}
      \includegraphics[width=\textwidth]{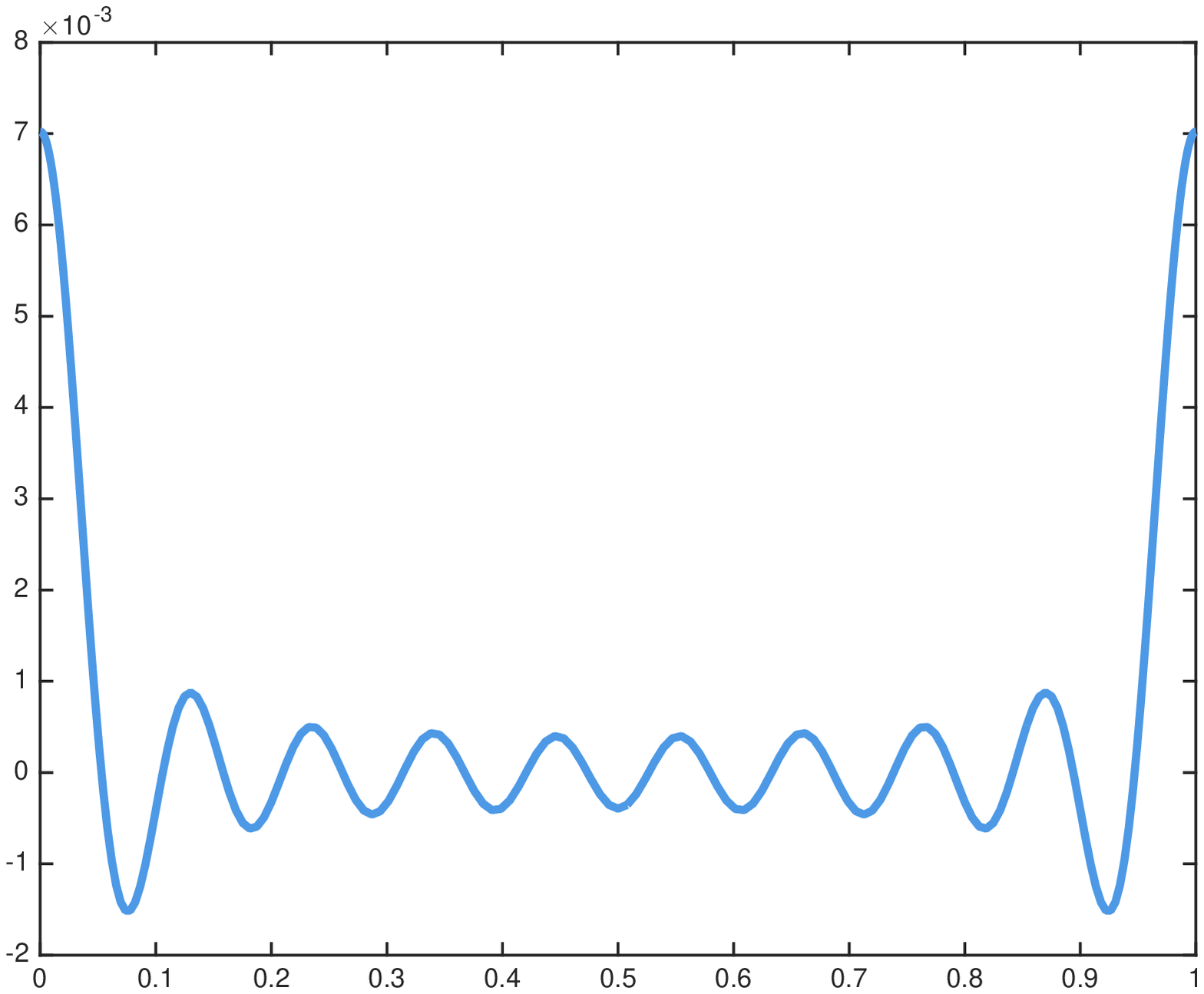}
      \caption{their difference}
    \end{subfigure}
     \begin{subfigure}[t]{0.42\textwidth}
      \includegraphics[width=\textwidth]{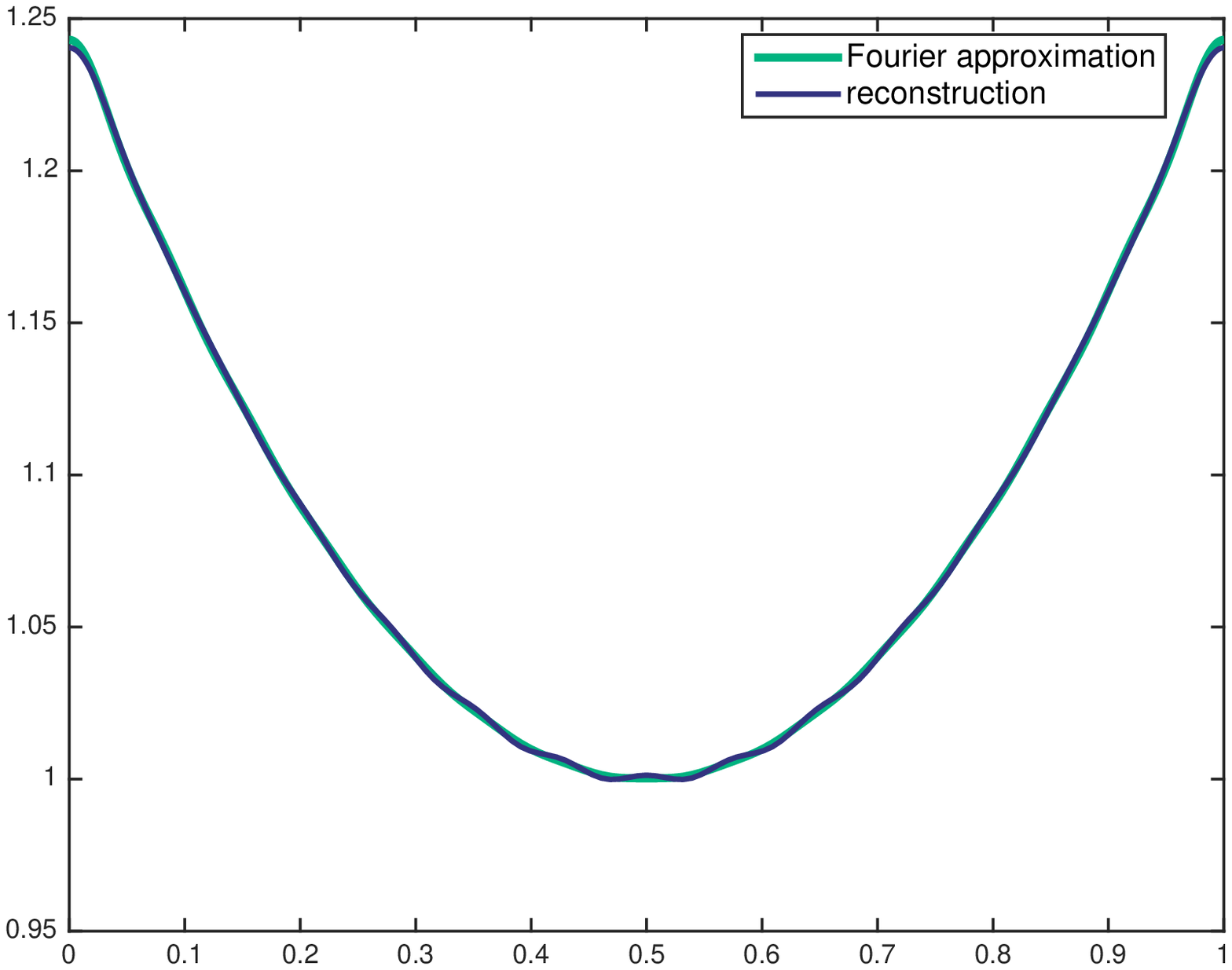}
      \caption{reconstruction and Fourier approximation with $M=K=15$}
    \end{subfigure}
    \begin{subfigure}[t]{0.42\textwidth}
      \includegraphics[width=\textwidth]{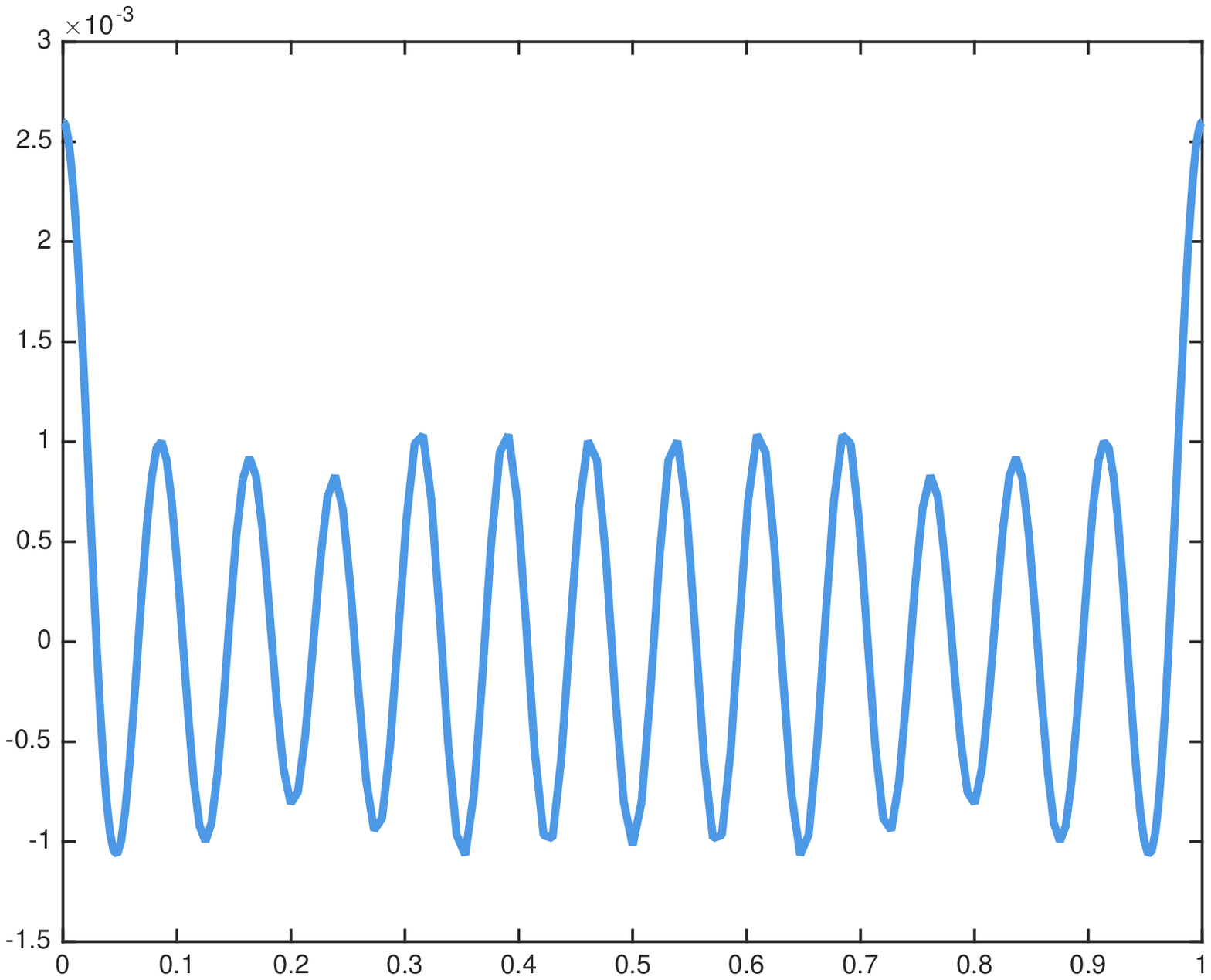}
      \caption{their difference}
    \end{subfigure}
    \caption{Recovery of $\rho_2(x)$ and its difference with the Fourier approximation}\label{E3}
  \end{figure}

\subsection{Example 3}
We will recover
\[
\rho_3(x)=1-0.3e^{-20(x-0.5)^2}
\]
using $K=7$ eigenvalues with noises. We set $N=300$ and $J=15$. As is discussed in \cite{lowe1992recovery}, we can never expect to do the recovery with even a few percent of errors in the eigenvalues themselves. And for the standard inverse Sturm-Liouville problem $(\ref{eq11})$, the appropriate measure of accuracy of the data are the numbers $c_k=\lambda_k-k^2\pi^2-\int_0^1q(x)\mathrm{d}x$.
For our experiment, we added to each eigenvalue $\lambda_k$ some noise that is normally distributed with zero mean and standard deviation $0.05$ and $0.1$. Figure \ref{E33} shows the reconstruction.

\begin{figure}
           \begin{subfigure}[t]{0.42\textwidth}
      \includegraphics[width=\textwidth]{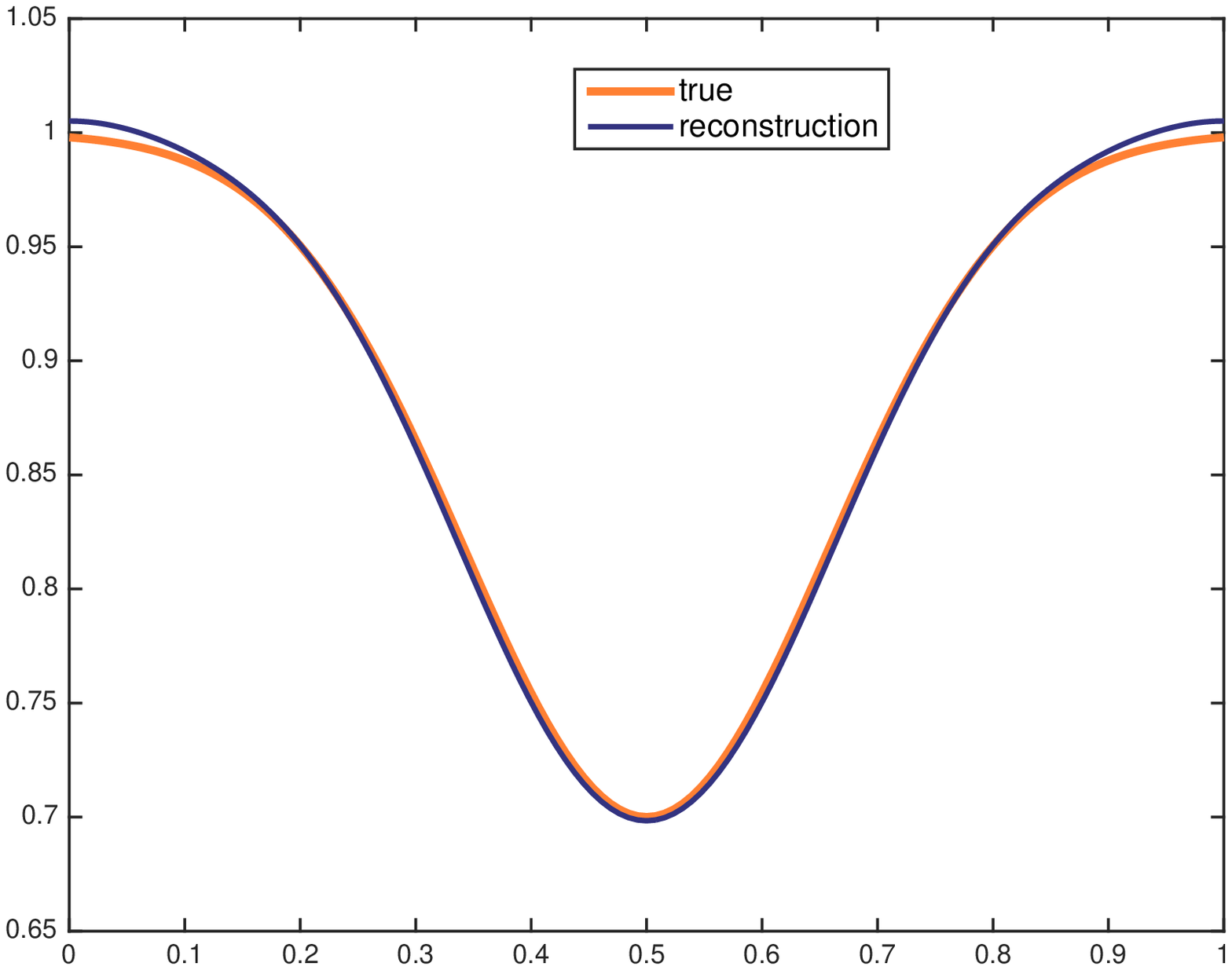}
      \caption{recovery: noise standard dev = 0.05}
    \end{subfigure}
    \begin{subfigure}[t]{0.42\textwidth}
      \includegraphics[width=\textwidth]{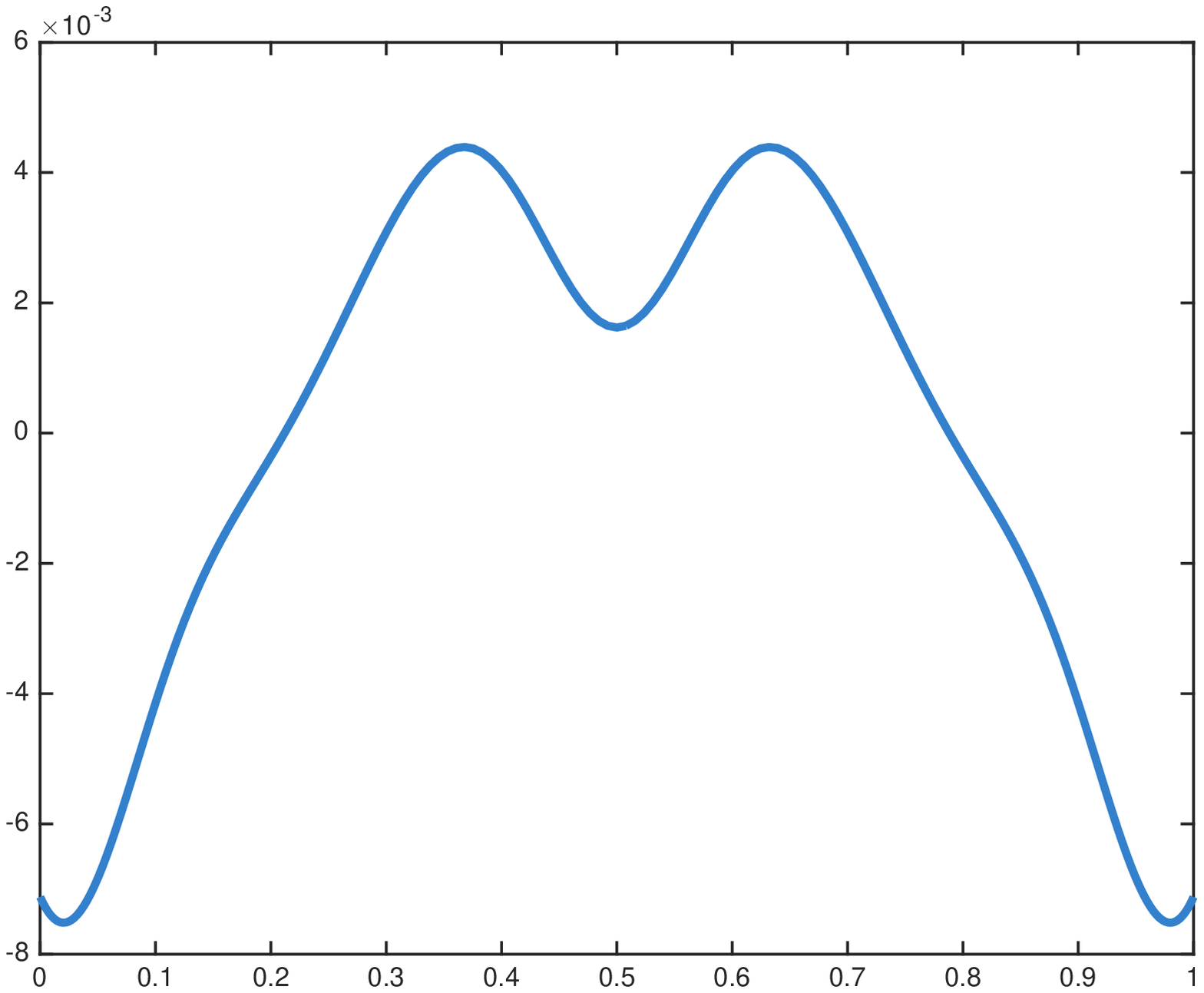}
      \caption{mismatch: noise standard dev = 0.05}
    \end{subfigure}
       \begin{subfigure}[t]{0.42\textwidth}
      \includegraphics[width=\textwidth]{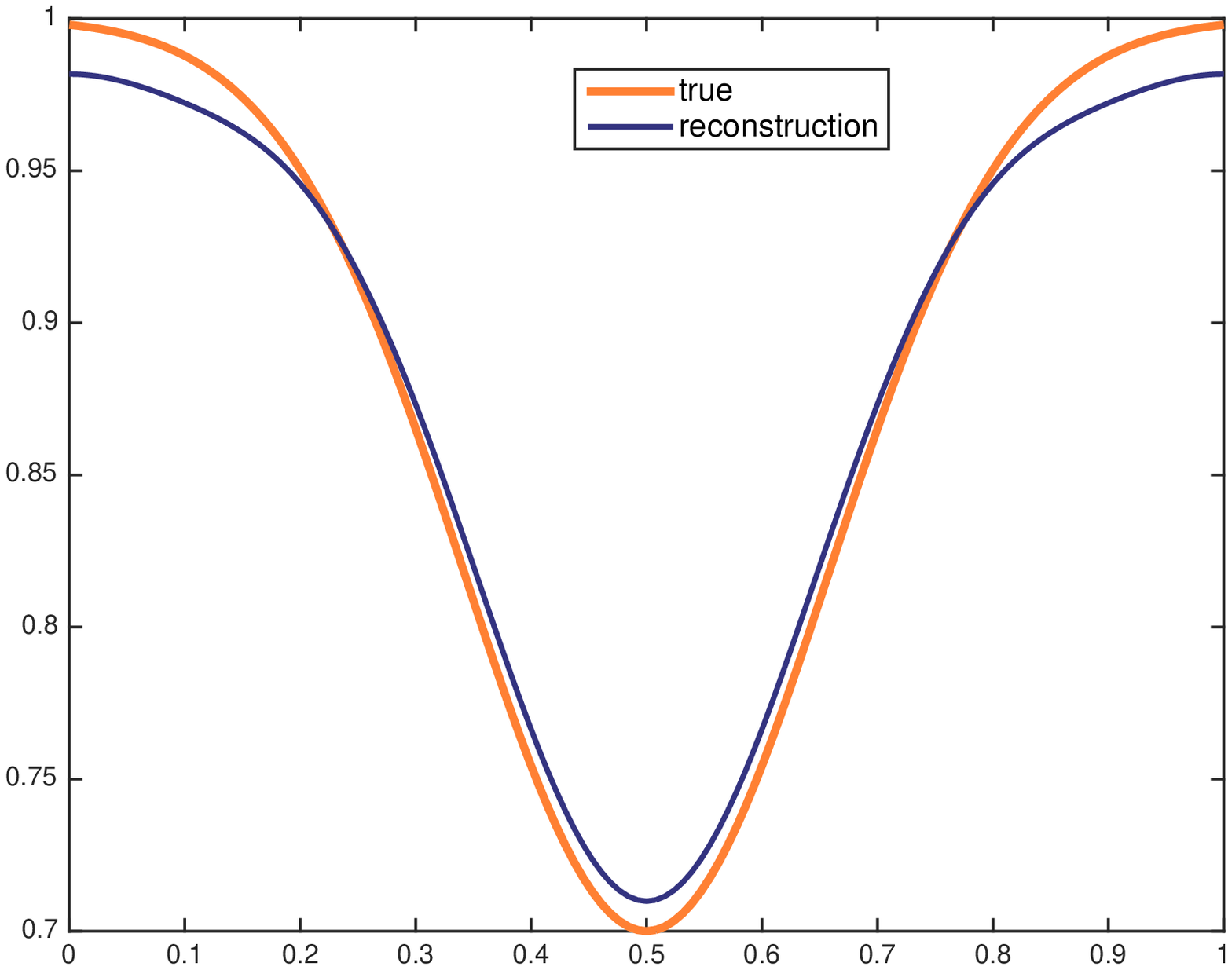}
      \caption{recovery: noise standard dev = 0.1}
    \end{subfigure}
    \begin{subfigure}[t]{0.42\textwidth}
      \includegraphics[width=\textwidth]{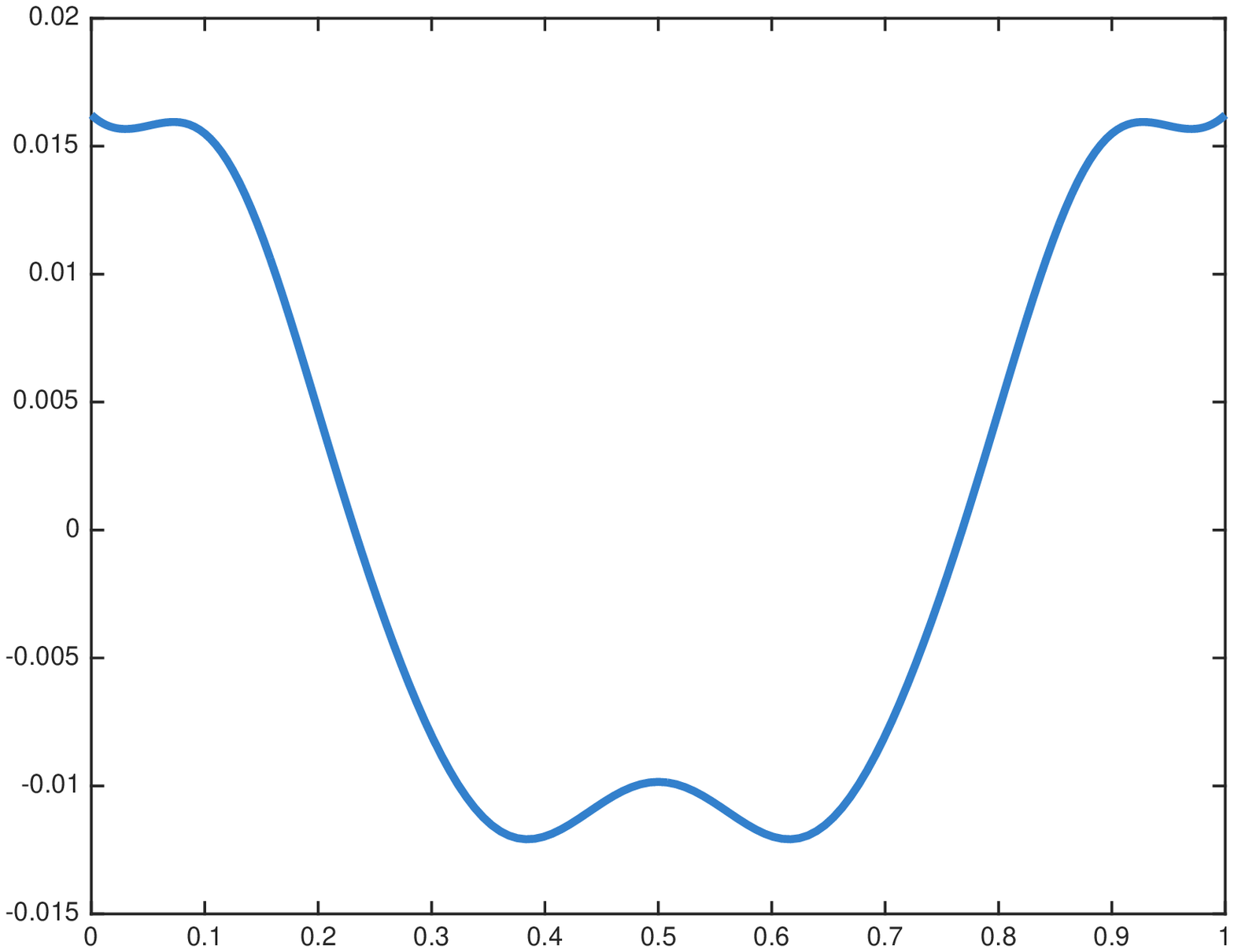}
      \caption{mismatch: noise standard dev = 0.1}
    \end{subfigure}
    \caption{Recovery of $\rho_3(x)$ with noises}\label{E33}
  \end{figure}
\subsection{Example 4}
For the fourth example, we work with a discontinuous function
\[
\rho_4(x)=\begin{cases}1,\quad &0<x<0.3,\\
1.1,\quad &0.3<x<0.7,\\
1,\quad &0.7<x<1.\end{cases}
\]
We first use $L=9,\, 15$ eigenvalues and reconstruct with $M=K$ basis functions. The take $N=1000$ and $J=30$. The results are shown in Figure \ref{E4}. The approximation of this discontinuous functions is not so close its Fourier approximation, but one can see from the Table \ref{T2} that the eigenvalues for the reconstructed density are actually closer to the ``exact" eigenvalues than the Fourier approximation.

Then we recover, with $M=9,\, 15$ basis functions, from $K=15,\,20$ eigenvalues. Still we take $N=1000$ and $J=30$. The results are also shown in Figure \ref{E4}.

\begin{table}[h]
  \begin{tabular}{|c | c | c |c| c| c| c|c|}
\hline
 &  &  \multicolumn{2}{m{3.3cm}|}{reconstruction with $K=M=9$}&  \multicolumn{2}{m{3.3cm}|}{reconstruction with $K=M=15$} &  \multicolumn{2}{m{3.8cm}|}{Fourier approximation with $15$ basis functions}\\
    \hline
   & true &  eigenvalue & mismatch&  eigenvalue & mismatch & eigenvalue & mismatch\\ \hline
$\lambda_1$&9.2151	&9.2151	&0.0000&	9.2151	&0.0000 &9.2200&	-0.0049\\
$\lambda_2$&38.2571&	38.2571	&0.0000	&38.2571&	0.0000&38.2867&-0.0296\\
$\lambda_3$&86.0274	&86.0274	&0.0000	&86.0274&	0.0000&86.0367&-0.0092\\
$\lambda_4$&150.8596&	150.8596	&0.0000	&150.8596&	0.0000&150.8942&-0.0346\\
$\lambda_5$&236.9245&	236.9248	&-0.0002&	236.9245&	0.0001&237.1215&-0.1969\\
$\lambda_6$&343.4206&	343.4212	&-0.0006	&343.4206	&0.0000&343.5498&	-0.1292\\
$\lambda_7$&464.5170&	464.5183	&-0.0013&	464.5163	&0.0007&464.5289	&-0.0119\\
$\lambda_8$&605.4567&	605.4922	&-0.0354&	605.4575	&-0.0008&605.8644&	-0.4077\\
$\lambda_9$&770.6854	&770.2775	&0.4079&	770.6869	&-0.0015&771.2058&	-0.5203\\
$\lambda_{10}$&9.5050E+02			&---------&---------&9.5050E+02	&0.0073&9.5050E+02&	-0.0177\\
$\lambda_{11}$&1.1453E+03			&---------&---------&1.1453E+03	&-0.0133&1.1458E+03&	-0.4331\\
$\lambda_{12}$&1.3668E+03			&---------&---------&1.3668E+03	&0.0185&1.368E+03	&-1.1854\\
$\lambda_{13}$&1.6077E+03			&---------&---------&1.6076E+03	&0.0083&1.608E+03	&-0.3431\\
$\lambda_{14}$&1.8581E+03			&---------&---------&1.8582E+03	&-0.0893&1.8581E+03&	-0.0233\\
$\lambda_{15}$&2.1325E+03			&---------&---------&2.1321E+03	&0.4194&2.1352E+03&	-2.6319\\
  \hline
  \end{tabular}
  \caption{True eigenvalues \textit{vs} eigenvalues for reconstructed $\rho_4(x)$ and the Fourier approximation}\label{T2}
\end{table}

\begin{figure}
    \begin{subfigure}[t]{0.49\textwidth}
      \includegraphics[width=\textwidth]{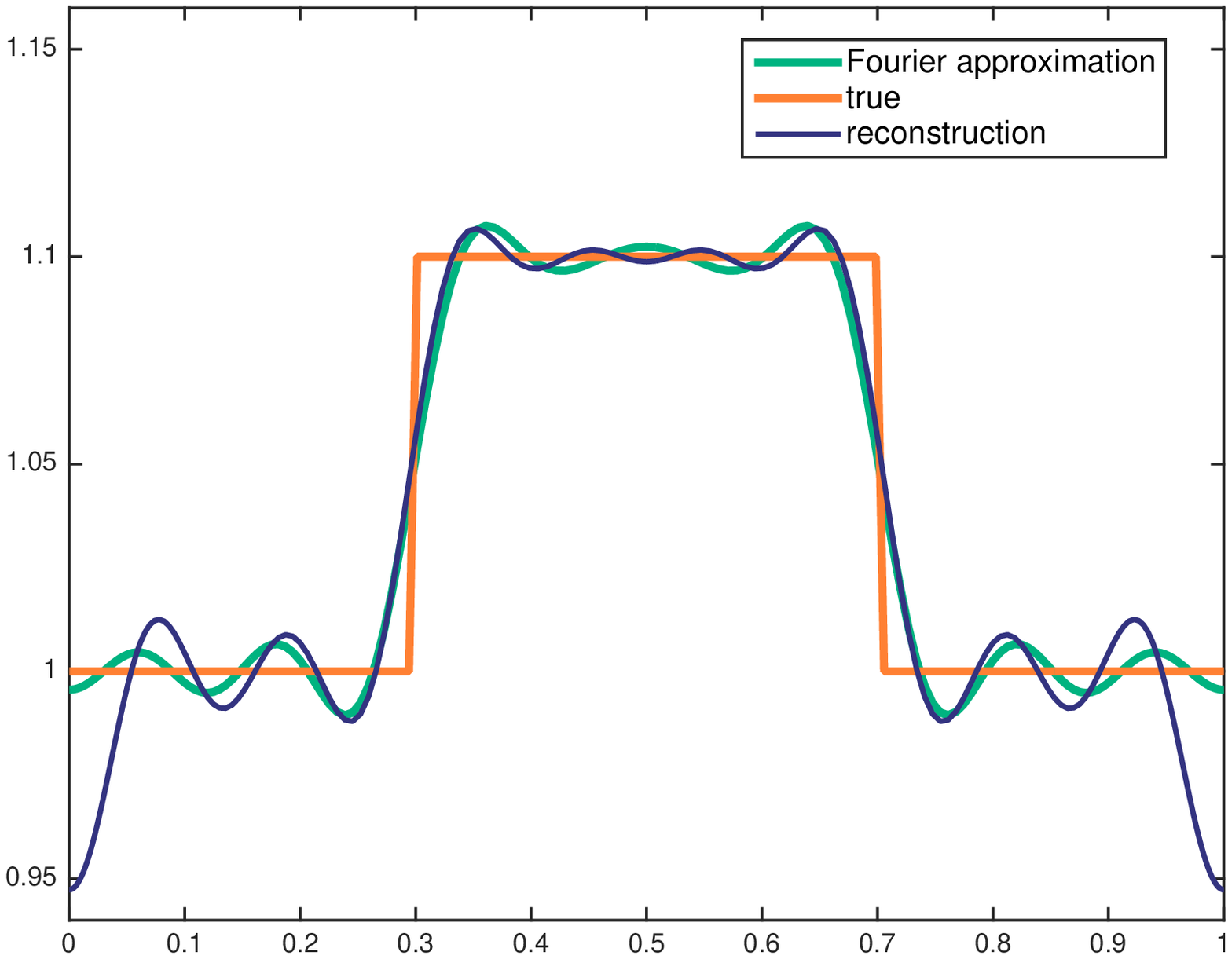}
      \caption{recovery with $K=9$, $M=9$}
    \end{subfigure}
        \begin{subfigure}[t]{0.49\textwidth}
      \includegraphics[width=\textwidth]{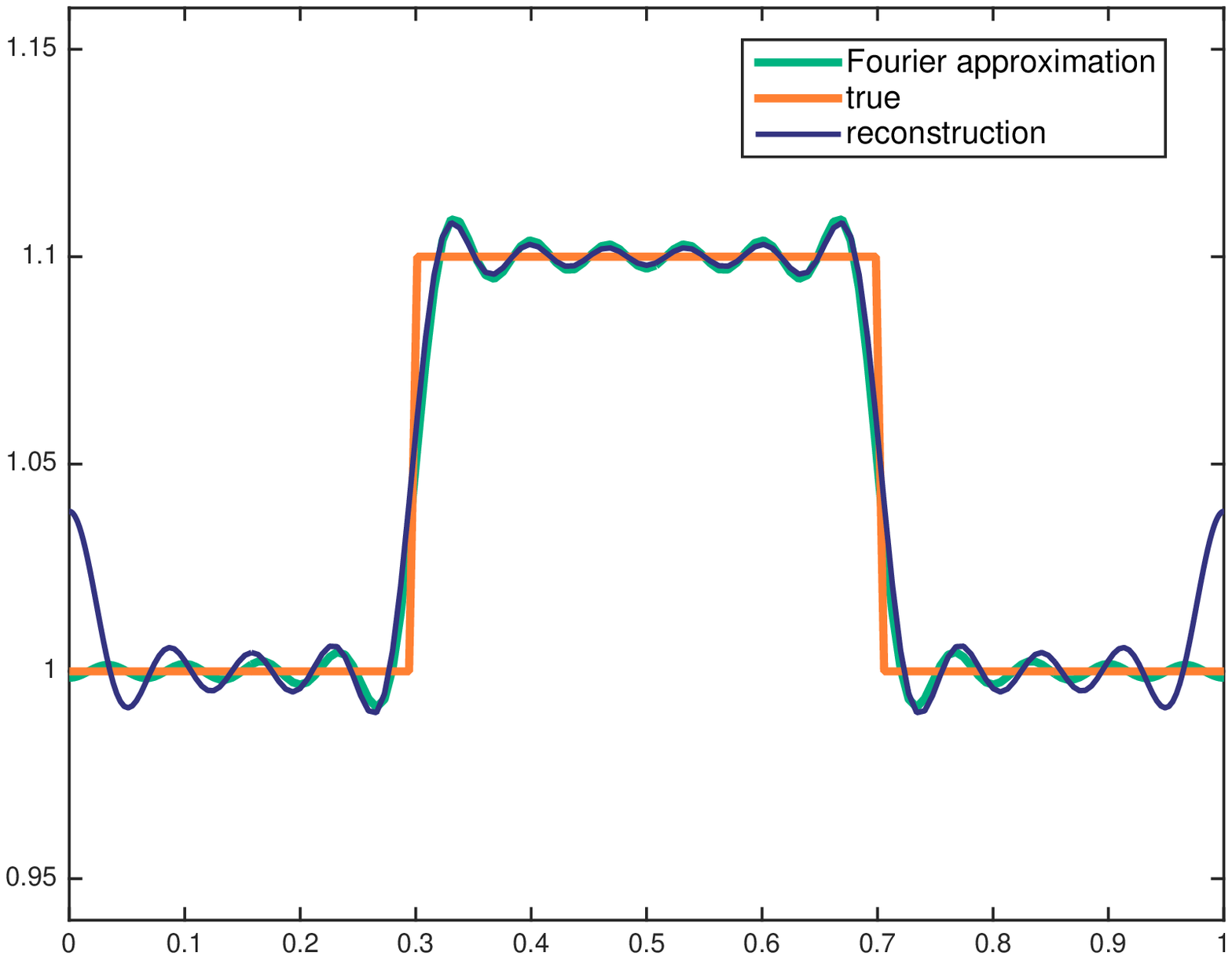}
      \caption{recovery with $K=15$, $M=15$}
    \end{subfigure}
    \begin{subfigure}[t]{0.49\textwidth}
      \includegraphics[width=\textwidth]{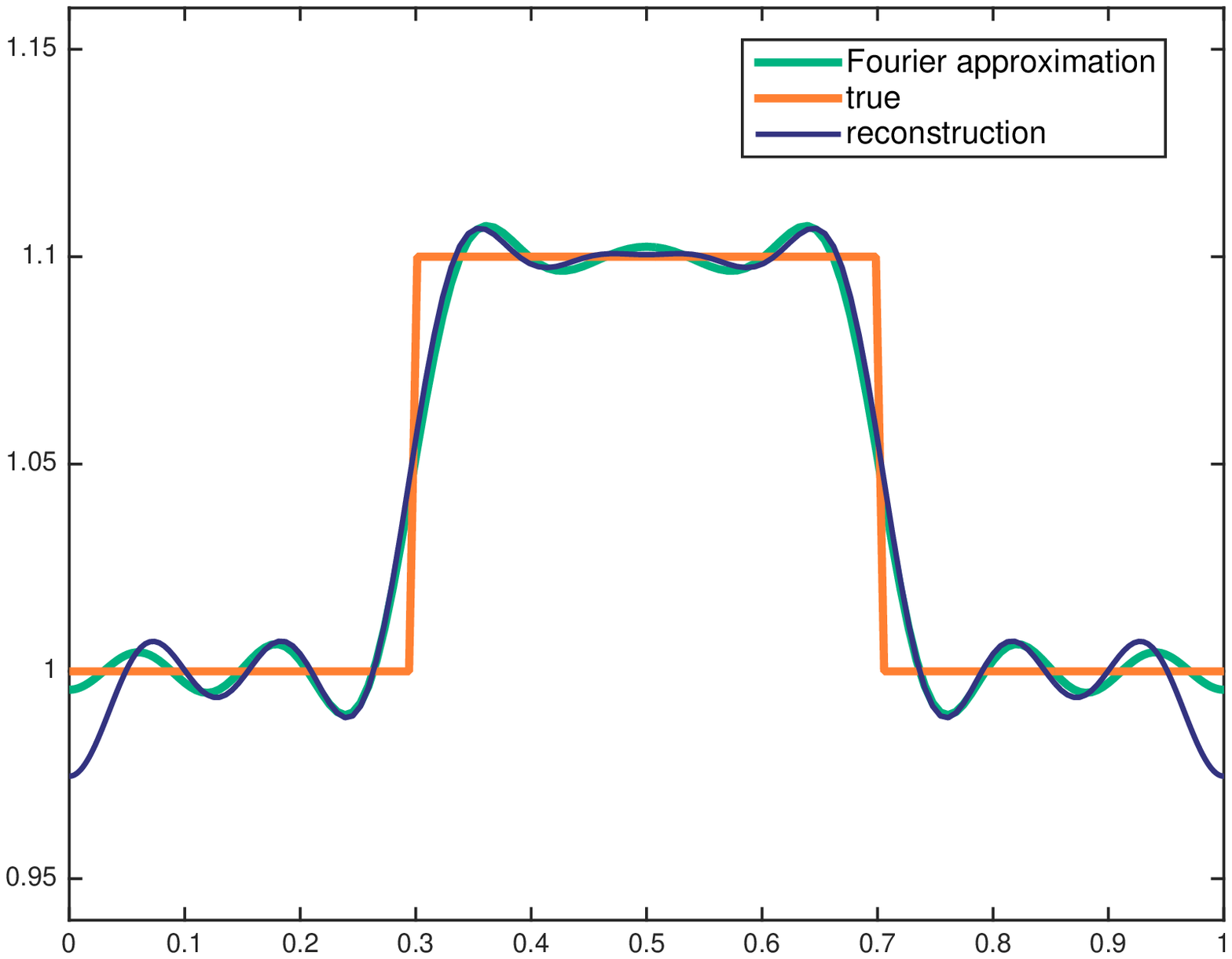}
      \caption{recovery with $K=15$, $M=9$}
    \end{subfigure}
        \begin{subfigure}[t]{0.49\textwidth}
      \includegraphics[width=\textwidth]{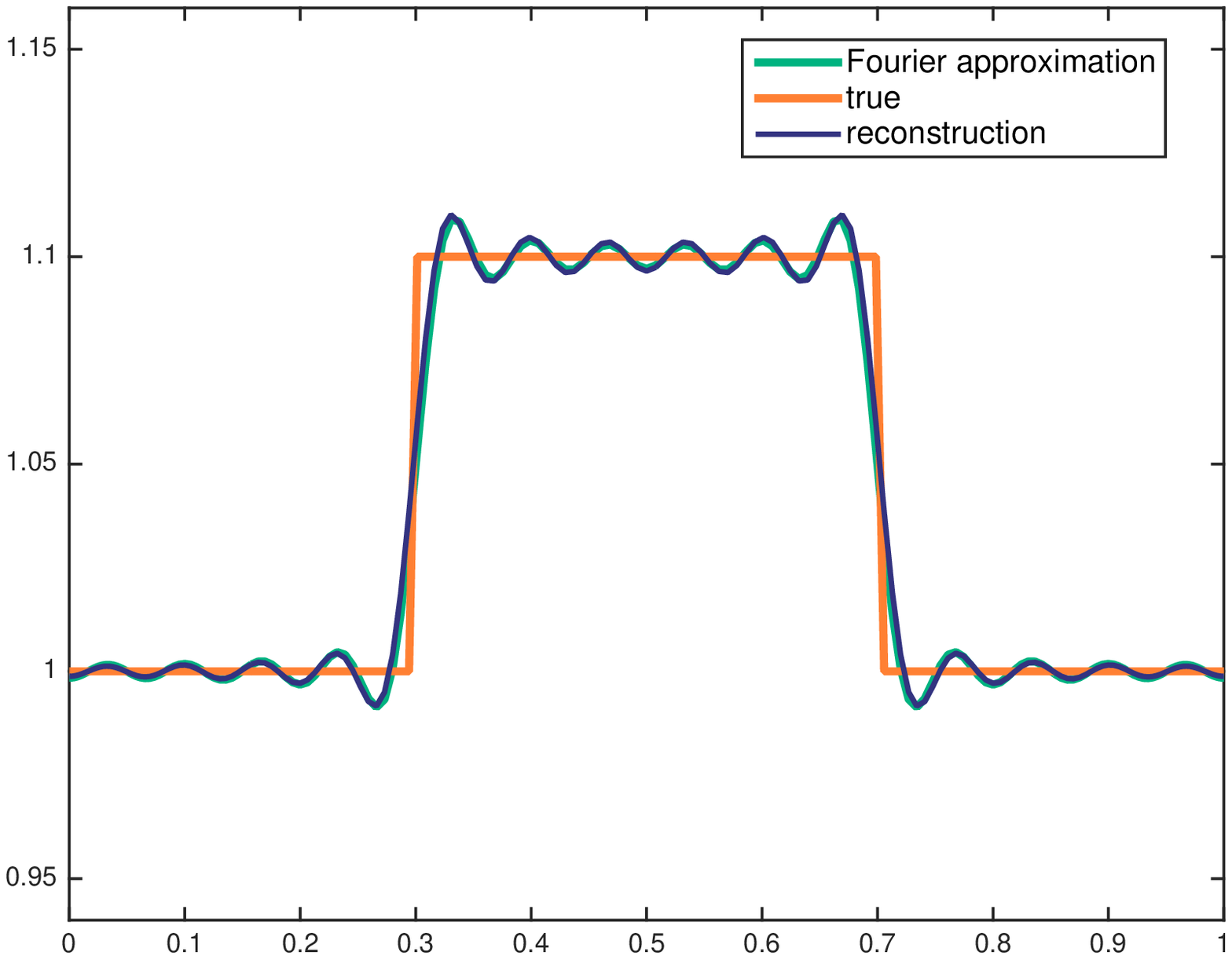}
      \caption{recovery with $K=20$, $M=15$}
    \end{subfigure}
    \caption{Recovery of $\rho_4(x)$ using $9$ and $15$ basis functions.}\label{E4}
  \end{figure}

\section{Discussion}\label{discussion}
The main purpose of this article is not to present an algorithm cooked to perfection, but rather to propose a general strategy for solving certain types of inverse spectral problems - that is to invert a sequence of trace formulas. We believe the same idea can be adopted to solve some other inverse spectral problems. In a subsequent paper, we will give a scheme for the inverse spectral problem for the (non self-adjoint) damped wave operator
\begin{equation}\label{damp_OP}
\left(
\begin{array}{cc}
0 & I\\
\frac{\mathrm{d}^2}{\mathrm{d}x^2}&-2a(x)
\end{array}
\right)
\end{equation}
using the same strategy.\\

We believe that the algorithm can be improved in various places. For example, 1) We de not know whether some polynomials other than Chebyshev would work better. Even if we are to use Chebyshev polynomials, our choice of Chebyshev polynomials are far from being ``optimal", and there must be plenty of techniques that can be adopted to reduce the computational cost, improve the accuracy, etc. We know for sure that the Chebyshev polynomials used in above numerical experiments are way more than necessary. The algorithm worked equally well with much few Chebyshev polynomials, for example, using only $\widetilde{T}_1,\,\widetilde{T}_{16},\,\widetilde{T}_{31}\cdots$ for \textit{Example 4}; 2) for the representation of $\rho$, one can choose different bases other than trigonometric functions $(\ref{cosine_expansion})$, if the function $\rho$ can not be well approximated by Fourier cosine functions. This also shows the great flexibility and potential of the algorithm.\\

 A practical inverse spectral problem usually involves very limited data (the low lying eigenvalues that are measurable), so one can not expect to recover too many unknowns. Numerically, we are inverting some map
 \[
 x\mapsto \mathcal{F}(x),
 \]
 where the dimension of $x$ and $\mathcal{F}(x)$ are both (at least intrinsically) quite small. However, the relation between the parameter to be recovered and the data is quite complicated. The amount of operations needed to compute $\mathcal{F}(x)$ might be astronomical, and vary in magnitude for different forward maps. The method presented in this article suggests a choice of the forward map $\mathcal{F}$, which only requires basic matrix operations.\\


\noindent\textbf{Other boundary conditions.}
First consider the inverse spectral problem for
\begin{equation}\label{eq2}
\begin{split}
&-\frac{\mathrm{d}^2u}{\mathrm{d}x^2}=\lambda\rho u,\quad\quad\quad x\in[0,1],\\
&u(0)=u'(1)=0.
\end{split}
\end{equation}
The eigenvalues for the above problem, together with the Dirichlet eigenvalues, should be sufficient to determine a general $\rho$. Now, denote $A\in\mathcal{L}(L^2(0,1),L^2(0,1))$ to be the operator defined as $v=Af$, where $v$ satisfies
\begin{equation}
\begin{split}
&-\frac{\mathrm{d}^2v}{\mathrm{d}x^2}=f,\quad\quad\quad x\in[0,1],\\
&v(0)=v'(1)=0.
\end{split}
\end{equation}
The Schwartz kernel associated with operator $A$ is
\[
g(x,y)=\begin{cases}x,\quad\quad 0\leq x\leq y\leq 1,\\
y,\quad\quad 0\leq y\leq x\leq 1.
\end{cases}
\]
We can use the eigenvalues and eigenfunctions $\{\mu_n,\phi_n\}_{n=1}^\infty$ of $-\Delta$ with the new boundary conditions,
\[
\mu_n=\left(\frac{2n-1}{2}\right)^2\pi^2,\quad\phi_n(x)=\sqrt{2}\sin \frac{2n-1}{2}\pi x,
\]
to represent
\begin{equation}\label{greenrepresentation}
g(x,y)=\sum_{n=1}^\infty\mu_n^{-1}\phi_n(x)\phi_n(y).
\end{equation}
As long as the boundary value problem is well-posed with imposed boundary conditions, one can derive a representation of Green's function for the Laplacian like $(\ref{greenrepresentation})$. However, it seems that the method does not work for Neumann problem
\begin{equation}\label{eq3}
\begin{split}
&-\frac{\mathrm{d}^2u}{\mathrm{d}^2x}=\lambda\rho u,\quad\quad\quad x\in[0,1],\\
&u'(0)=u'(1)=0.
\end{split}
\end{equation}
There is always an eigenvalue $0$, and the boundary value problem
\begin{equation}
\begin{split}
&-\frac{\mathrm{d}^2v}{\mathrm{d}^2x}=\rho f,\quad\quad\quad x\in[0,1],\\
&v'(0)=v'(1)=0
\end{split}
\end{equation}
is not well-posed. 

~\\
\noindent\textbf{Acknowledgements.} This project started when JZ attended an advanced topics course offered by Steven Cox at Rice University during Fall 2015. The authors are indebted to Steven Cox's first idea of trace inversion and stimulating discussions. JZ also gratefully acknowledges the helpful discussion with Xiao Liu and Peter Caday. XX was partly supported by NSFC grant No. 11471284, 11421110002, 116221101, 91630309 and the Fundamental Research Funds for the Central Universities.

\bibliographystyle{abbrv}
\bibliography{biblio}

\end{document}